\tikzset{ampersand replacement=\&}
\newcommand{\mack}{\mathcal{M}\!\textit{ack}} 
\newcommand{\Sp}{\mathcal{S}\!\textit{p}} 
\newcommand{\Orb}{\mathrm{Orb}}  
\newcommand{\Orbx}{\mathrm{Orb}^{\times}} 
\newcommand{\dga}{\mathrm{DGA}}
\newcommand{\cdga}{\mathrm{CDGA}}
\newcommand{\Comm}{\mathrm{Comm}}
\newcommand{\gr}{\mathrm{gr}}   
\newcommand{\CDGA}{\mathrm{CDGA}}
\newcommand{\alg}{\mathrm{alg}}
\newcommand{\Enaivealg}{E^1_\infty\text{-}\alg}
\newcommand{\EGalg}{E^G_\infty\text{-}\alg}
\newcommand{\QQ}{\mathbb{Q}} 
\newcommand{\KU}{\textit{KU}} 
\newcommand{\ku}{\textit{ku}} 
\newcommand{\RU}{\textit{RU}} 
\newcommand{\M}{\underline{M}} 
\newcommand{\rep}{\underline{\RU}} 
\DeclareMathOperator{\id}{id} 
\newcommand{\abs}[1]{\lvert{#1}\rvert}
\newcommand{\newrefformat}[2]{}
\newtheorem{thm}{Theorem}[section] 
\newtheorem{cor}{Corollary}[section]
\let\c@cor\c@thm\makeatother
\newtheorem{lemma}{Lemma}[section]
\let\c@lemma\c@thm\makeatother
\newtheorem{prop}{Proposition}[section]
\let\c@prop\c@thm\makeatother
\let\c@claim\c@thm\makeatother
\newtheorem{thrm}{Theorem}[section]
\let\c@thrm\c@thm\makeatother
\newtheorem*{unnumberedtheoremA}{Theorem A}  
\newtheorem*{unnumberedtheoremB}{Theorem B}
\theoremstyle{definition}
\newtheorem{defn}{Definition}[section]
\let\c@defn\c@thm\makeatother
\newtheorem{const}{Construction}[section]
\let\c@const\c@thm\makeatother
\let\c@notn\c@thm\makeatother
\let\c@outline\c@thm\makeatother
\let\c@recoll\c@thm\makeatother
\let\c@strategy\c@thm\makeatother
\theoremstyle{remark}
\newtheorem{rem}{Remark}[section]
\let\c@rem\c@thm\makeatother
\newtheorem{ex}{Example}[section]
\let\c@ex\c@thm\makeatother
\let\c@observation\c@thm\makeatother
\newcommand{\A}{\mathcal{A}}
\newcommand{\bQ}{\mathbb{Q}}
\newcommand{\lra}{\longrightarrow}
\newcommand{\Ch}{\mathsf{Ch}}
\newcommand{\SH}{\mathsf{SH}}
\let\c@equation\c@thm
\numberwithin{equation}{section}
\title[Genuine-commutative structure on rational $K$-theory]{Genuine-commutative structure on rational equivariant $K$-theory for finite abelian groups}
\author[Bohmann]{Anna Marie Bohmann}
\address[Bohmann]{Vanderbilt University}
\author[Hazel]{Christy Hazel}
\address[Hazel]{University of California Los Angeles}
\author[Ishak]{Jocelyne Ishak}
\address[Ishak]{Vanderbilt University}
\author[K\k{e}dziorek]{Magdalena K\k{e}dziorek}
\address[K\k{e}dziorek]{Radboud University Nijmegen}
\author[May]{Clover May}
\address[May]{University of California Los Angeles}
\begin{document}

\begin{abstract}
In this paper, we build on the work from \cite{BHIKM} to show that periodic rational $G$-equivariant topological $K$-theory has a unique genuine-commutative ring structure for $G$ a finite abelian group.
This means that every genuine-commutative ring spectrum whose homotopy groups are those of $KU_{\mathbb{Q},G}$ is weakly equivalent, as a genuine-commutative ring spectrum, to $KU_{\mathbb{Q},G}$.  In contrast, the connective rational equivariant $K$-theory spectrum does not have this type of uniqueness of genuine-commutative ring structure.

\end{abstract}

\maketitle

\section{Introduction}\label{sect:intro}

Periodic equivariant topological $K$-theory $KU_{G}$ is one of the foundational cohomology theories of algebraic topology.  It was first defined by Segal \cite{Segal}. Equivariant $K$-theory for a group $G$ links stable homotopy theory with the complex representation theory of $G$ and all of its subgroups. For example, induction and restriction of representations are visible in the structure of the zeroth stable homotopy groups of $KU_{G}$. Equivariant complex $K$-theory arises from consideration of vector bundles on manifolds and provides deep connections between representation theory and geometric topology.

Equivariance has come to the forefront of stable homotopy theory in recent years, as researchers have realized the powerful possibilities of equivariant structures.  These were made manifest in Hill, Hopkins, and Ravenel's ground-breaking solution to the Kervaire invariant one problem \cite{HHR}.  The formulation of this problem does not involve group actions in any way, but the solution in \cite{HHR} relies fundamentally on the techniques of equivariant homotopy theory.  An essential tool in this work is an algebraic structure of multiplicative transfer maps that is present in the homotopy groups of a $G$-equivariant cohomology theory with a strongly commutative cup product, when $G$ is finite. This structure is part of a hierarchy of levels of commutativity in the equivariant world that describe the extent to which homotopy commutativity is compatible with the group action.  This hierarchy has been known for some time \cite{ConstenobleWaner,McClureTate} but had not been extensively used.  Systematic study of these levels of commutativity began with work of Blumberg and Hill on $N_\infty$-operads \cite{BlumbergHill}, and it has since been a very active area of research \cite{BlumbergHill2, BlumbergHill3, Rubin, GutierrezWhite, BonventrePereira, Bohme1, Bohme2, BBRNinftyassociahedra}. While this body of work has gone a long way towards illuminating equivariant commutativity, these structures are still widely regarded as subtle and complicated.

In this paper, we analyze the rationalization of equivariant complex $K$-theory $\KU_{\mathbb{Q},G}$ from the perspective of two levels of commutativity, where $G$ is finite abelian.   Equivariant $K$-theory enjoys the maximal level of commutativity, sometimes called \emph{genuine commutativity}, as shown in \cite{Joachim}.  Forgetting structure, we may also view it as having the minimal level of commutativity, \emph{naive commutativity}.  The results here and of our previous work in \cite{BHIKM} show that rational periodic equivariant $K$-theory is homotopically unique when considered at both levels.  In contrast, the non-periodic version of rational complex $K$-theory is only unique when considered at the minimum level of commutativity.

The difference between these levels of commutativity boils down to the presence of \emph{norm maps} on the homotopy groups of a spectrum.  After rationalization, these norm maps have a particularly elegant and approachable form.   Our uniqueness result---the first of its kind in equivariant stable homotopy theory---illustrates the power of algebraic models for rational homotopy theory.  It also serves as an approachable and illuminating window into the structures that distinguish the varying levels of equivariant commutativity.

Our proof uses a recent result of Wimmer \cite{Wimmer} that provides an algebraic model for the category of rational genuine-commutative ring $G$-spectra when $G$ is a finite group.  In particular, Wimmer shows that the $\infty$-category of rational genuine-commutative ring $G$-spectra, which have all norms, is equivalent to the category $[\Orb_G, \CDGA(\bQ)]$ of functors from the orbit category of $G$ to the category of rational commutative differential graded algebras.  The norm maps that characterize genuine-commutative ring spectra
show up in the algebraic model as maps of CDGAs $A_\bullet(G/H)\to A_\bullet(G/K)$ arising from maps $G/H\to G/K$ in the orbit category that are not isomorphisms.
In this way, Wimmer's algebraic model for genuine-commutative ring spectra extends the algebraic model for naive-commutative ring spectra developed by Barnes, Greenlees, and K\k{e}dziorek in \cite{BarnesGreenleesKedziorek}---their work shows that rational naive-commutative ring spectra are modeled by functors from $\Orb^\times_G$ to $\CDGA(\QQ)$, where $\Orb^\times_G$ denotes the subcategory of isomorphisms in $\Orb_G$.

In previous work \cite{BHIKM}, we calculated the image of rational equivariant $K$-theory in the algebraic model for naive-commutative ring spectra, when $G$ is an abelian group.  In fact, the calculation of \cite{BHIKM} follows from a uniqueness result in that category akin to the main result of the present paper.  The homotopy groups of topological complex $K$-theory form a commutative Green functor and were computed previously by Segal \cite{Segal}. When $G$ is abelian, these homotopy groups have trivial Weyl group actions and we used this to show that there is a unique naive-commutative ring spectrum with this commutative Green functor of homotopy groups. In this paper, we further show there is a unique genuine-commutative ring spectrum with the same underlying homotopy groups. 
This is proved as Theorem \ref{genuineuniquenessKU}.
\begin{unnumberedtheoremA}
Let $G$ be a finite abelian group. If $X$ is a genuine-commutative rational ring $G$-spectrum whose underlying homotopy Green functor is isomorphic to that of $\KU_{\QQ,G}$, then $X$ is weakly equivalent to $\KU_{\QQ,G}$ as genuine-commutative ring spectra.
\end{unnumberedtheoremA}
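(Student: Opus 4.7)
The plan is to transport the theorem along Wimmer's equivalence so that it becomes a uniqueness statement about diagrams $\Orb_G \to \CDGA(\QQ)$. Let $F_X$ and $F_K$ denote the functors in $[\Orb_G, \CDGA(\QQ)]$ corresponding to $X$ and $\KU_{\QQ,G}$ respectively; the hypothesis on the underlying Green functor translates into an isomorphism $H^* F_X \cong H^* F_K$ of functors $\Orb_G \to \text{graded rings}$, and the task is to promote this to a natural quasi-isomorphism $F_X \simeq F_K$ in $[\Orb_G, \CDGA(\QQ)]$.

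The first step I would carry out is object-wise formality. For each subgroup $H \le G$, character theory gives $RU(H)\otimes \QQ \cong \QQ^{|H|}$ as a ring (a product of copies of $\QQ$ indexed by irreducible characters), and so the cohomology at the object $G/H$ is
\[
    H^*(F_K(G/H)) \cong \QQ^{|H|}[u^{\pm 1}], \qquad |u| = 2.
\]
This ring is free as a graded-commutative $\QQ$-algebra and concentrated in even degrees, so any CDGA over $\QQ$ with this cohomology is formal. Hence both $F_X(G/H)$ and $F_K(G/H)$ are quasi-isomorphic to the discrete CDGA $\QQ^{|H|}[u^{\pm 1}]$ with zero differential.

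Next, I would match up the morphisms. The main theorem of \cite{BHIKM} already handles the isomorphisms in $\Orb_G$: in the abelian setting, their uniqueness result for the naive-commutative ring structure produces an equivalence $F_X|_{\Orbx_G} \simeq F_K|_{\Orbx_G}$. For the remaining morphisms --- for $G$ abelian these are the projections $G/H \to G/K$ associated to inclusions $H \lneq K$, each parametrized by a $G/K$-torsor --- the homotopy-level map is forced by the Green functor assumption, and it gives the correct norm on cohomology. Using formality, any CDGA map between two formal objects whose cohomologies are free graded-commutative over $\QQ$ in even degrees is determined up to homotopy by its effect on $H^*$, so each individual non-iso morphism has an essentially unique lift to the algebraic model.

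The main obstacle is assembling these individual lifts into a coherent natural weak equivalence rather than a bunch of unrelated maps. I would address this by working in a projectively cofibrant-fibrant replacement of both diagrams and then inductively extending the equivalence from $\Orbx_G$ to all of $\Orb_G$ along a filtration by subgroup order. At each stage the obstruction to extending the natural transformation across the newly added norm morphisms lies in homotopy groups of derived mapping spaces of CDGAs over $\QQ$ of the form above, and the formality/rigidity statement of the previous paragraph forces these obstructions to vanish. This produces the desired $F_X \simeq F_K$, and transporting back through Wimmer's equivalence yields the weak equivalence $X \simeq \KU_{\QQ,G}$ of genuine-commutative ring spectra.
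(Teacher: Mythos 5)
Your proposal misidentifies the objectwise cohomology of $F_K$, and this points to a more fundamental gap. The algebraic model satisfies $H_*(\Theta_H(X)) \cong \pi_*(\Phi^H X)$, where $\Phi^H$ denotes \emph{geometric} fixed points, not categorical fixed points. For $\KU_G$ this gives $\QQ(\zeta_{|H|})[\beta^{\pm 1}]$ when $H$ is cyclic and $0$ when $H$ is not cyclic (\cref{mainresultsprevpaper}), rather than $RU(H)_\QQ[u^{\pm 1}]$. (Incidentally $RU(H)\otimes\QQ \not\cong \QQ^{|H|}$ either: $RU(C_n)\otimes\QQ \cong \prod_{d\mid n}\QQ(\zeta_d)$, which is not split for $n>2$.) Your assertion that these rings are ``free as graded-commutative $\QQ$-algebras'' is likewise false, so the objectwise formality step is not justified as stated, though formality itself does hold and was established in \cite{BHIKM}.

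The genuine gap, however, is your claim that for the non-isomorphism morphisms ``the homotopy-level map is forced by the Green functor assumption.'' The Green functor of homotopy groups records restrictions, transfers, and Weyl actions; it does \emph{not} record norm maps, and correspondingly it does not determine the norm shadows $H_*F_X(G/H)\to H_*F_X(G/K)$ as morphisms. Showing that objectwise homology nevertheless forces these maps is precisely the mathematical content that separates $\KU_G$ from $\ku_G$. For $\KU_G$ this is true because each nonzero $H_*(\Theta_H(\KU_G))\cong\QQ(\zeta_n)[\beta^{\pm 1}]$ is a graded field, so any graded ring map between them is injective; in particular $\beta$ (being a unit) cannot be killed, and finite-dimensionality in each degree forces the expected embedding. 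For $\ku_G$ this fails: the norm shadow may send $\beta\mapsto\beta$ or $\beta\mapsto 0$, and the Green functor cannot distinguish these choices (\cref{non_uniqueness_ku}). As written, your proof would equally ``prove'' the false uniqueness statement for $\ku_G$, which shows the step is not actually established.

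On the assembly step, the paper takes a genuinely different route from the obstruction-theoretic extension you sketch. Rather than inductively extending a natural transformation from $\Orbx_G$ to $\Orb_G$ and arguing obstruction groups vanish, it constructs an explicit Koszul-style diagram $D_\bullet$ in $\A(E^G_\infty(G))$ (\cref{constructH0resolution} tensored with $\QQ[\gamma_K,\bar\gamma_K]\otimes E(y_K)$, $d(y_K)=\gamma_K\bar\gamma_K-1$) and shows directly that $D_\bullet$ admits a quasi-isomorphism of diagrams to any $A_\bullet$ with the correct objectwise homology, by induction over the subgroup lattice. This makes the role of the invertible Bott class explicit and avoids computing obstruction groups. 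Your approach could plausibly be completed, but you would need to both compute the relevant mapping-space obstructions and, more importantly, isolate the graded-field/invertibility argument that is currently missing.
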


The assumption in Theorem A that $X$ be a genuine-commutative rational ring $G$-spectrum is essential, as we show in \cref{e:no_extension_to_genuine}. Working with the algebraic models for naive- and genuine-commutative ring $G$-spectra, we show that there exists a naive-commutative ring $G$-spectrum $Y$ that is equivalent to $\KU_{\QQ,G}$ as a naive-commutative ring $G$-spectrum, but does not enjoy a genuine-commutative ring structure. That is, there exists no genuine-commutative ring $G$-spectrum whose underlying naive-commutative ring $G$-spectrum is $Y$.

In \cite{BHIKM}, we also show a similar uniqueness result for the naive-commutative structure on connective $K$-theory $\ku_{\QQ,G}$.  However, in contrast to the case for periodic $K$-theory, we do not get this uniqueness at the genuine-commutative level for $\ku_{\QQ,G}$. Instead, we find that not all weak equivalences in the category of naive-commutative ring $G$-spectra can be extended to equivalences of genuine-commutative ring $G$-spectra.  This result is proved as \cref{thm:non_uniqueness_ku}.
\begin{unnumberedtheoremB}
Let $G$ be a finite abelian group. There exists a rational genuine-commutative ring $G$-spectrum $X$ whose underlying Green functor of homotopy groups is isomorphic to that of $\ku_{\QQ,G}$ but which is not weakly equivalent to $\ku_{\QQ,G}$.  That is, $X$ is weakly equivalent to $\ku_{\QQ,G}$ in the category of rational naive-commutative ring $G$-spectra but not in the category of rational genuine-commutative ring $G$-spectra.
\end{unnumberedtheoremB}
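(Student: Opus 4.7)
The plan is to produce the spectrum $X$ explicitly in Wimmer's algebraic model, which identifies rational genuine-commutative ring $G$-spectra with the $\infty$-category of functors $\Orb_G \to \CDGA(\QQ)$. Let $F\colon \Orb_G \to \CDGA(\QQ)$ denote the algebraic model of $\ku_{\QQ,G}$: on each orbit $G/H$ it takes the value $\RU(H)_\QQ[\beta]$ with $\abs{\beta}=2$ and zero differential, carries trivial Weyl actions (since $G$ is abelian), and assigns to each non-iso $G/K \to G/H$ with $H\subseteq K$ the restriction of representations on degree $0$ together with $\beta\mapsto\beta$. The goal is to construct a second functor $F'$ with the same induced Green functor of homotopy groups as $F$ and with $F'|_{\Orb^\times_G}\simeq F|_{\Orb^\times_G}$ (which yields naive-commutative equivalence via the model of Barnes--Greenlees--K\k{e}dziorek), but with $F\not\simeq F'$ in $[\Orb_G,\CDGA(\QQ)]$. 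Wimmer's equivalence then produces the desired genuine-commutative ring $G$-spectrum $X$.

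The construction relies on a structural contrast with Theorem A. In the periodic case the CDGA $\RU(H)_\QQ[\beta^{\pm 1}]$ is a Laurent-polynomial ring in which $\beta$ is a unit, so for degree reasons any CDGA restriction map must send $\beta$ to a unit scalar multiple of itself; once the Green functor is fixed, this leaves no room for inequivalent extensions. In the connective case $\beta$ is only a non-zero-divisor, and the polynomial CDGA $\RU(H)_\QQ[\beta]$ admits a larger supply of self-maps and higher coherences that act trivially on cohomology, built out of the augmentation ideal $I(H)\subset \RU(H)_\QQ$ and its action on $\beta$. These coherences are invisible to both the Green functor and to the restriction of the diagram to $\Orb^\times_G$, yet they support non-trivial deformations of the overall $\Orb_G$-diagram structure. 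Concretely, I would build $F'$ as a Sullivan-type cofibrant replacement of $F$ carrying such a twist: adjoin to each $F(G/H)$ auxiliary acyclic generators in positive degree whose differentials land in $I(H)\cdot \beta$, and modify the restriction maps $F'(G/K)\to F'(G/H)$ using these new generators so that the induced map on cohomology, and the data on $\Orb^\times_G$, are unchanged but the global homotopy-coherent diagram is twisted.

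The main obstacle is showing that $F\not\simeq F'$. An equivalence of $\Orb_G$-diagrams would produce a family of CDGA quasi-isomorphisms $\psi_H\colon F(G/H)\to F'(G/H)$ compatible with the restriction maps in the homotopy-coherent sense. Since each $\psi_H$ must induce the identity on cohomology, its freedom is controlled entirely by the augmentation-ideal-valued twists used to build $F'$, and the compatibility conditions coming from the non-iso morphisms in $\Orb_G$ give rise to a cohomological obstruction with coefficients in the $\underline{\RU}$-valued system. The plan is to choose the twist so that this obstruction class is nonzero, and to verify non-vanishing in a tractable small case such as $G=C_p$, where the orbit category has two objects and a unique non-iso restriction and the obstruction reduces to an explicit computation inside $I(C_p)$. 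The non-vanishing of this class is exactly the mechanism which fails for $\KU_{\QQ,G}$: in the periodic case the corresponding coefficient system collapses because $\beta$ is invertible, recovering the uniqueness of Theorem A. Propagation from the small case to general finite abelian $G$ should then follow by naturality in subgroup restriction, giving the desired $X$ once we translate $F'$ back across Wimmer's equivalence.
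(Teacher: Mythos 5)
Two issues arise, the second of which is fatal to your strategy. First, your description of the algebraic model of $\ku_G$ is not correct: the value of $\Theta(\ku_G)$ at $G/K$ computes $\pi_*(\Phi^K(\ku_G))$, the homotopy of the \emph{geometric} fixed points, not $\RU(K)_\QQ[\beta]$. As recorded in \cref{mainresultsprevpaper}, the homology at $K$ is $V_K[\beta]$ with $V_K=\QQ(\zeta_{\lvert K\rvert})$ for $K$ cyclic and $V_K=0$ otherwise, and the norm shadows on $H_0$ are cyclotomic field embeddings, not restriction of representations. The Green functor $\rep_G$ is only recovered after reassembling along the idempotent splitting; it is not what the $\Orb_G$-diagram displays directly.

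Second, and more fundamentally, your construction targets an invariant that carries no information. You propose to hold the $\Orb_G$-diagram of cohomology fixed (including the norm shadows induced on homology) and to detect non-equivalence through a higher obstruction built from Sullivan-type twists in $I(H)\cdot\beta$. But \cref{formalitylittleku} of the paper shows that any $A_\bullet\in\A(E_\infty^G(G))$ whose full diagram of homology is isomorphic to $H_*(\Theta(\ku_G))$ is already weakly equivalent to $\Theta(\ku_G)$: the homology diagram is a complete invariant, so the obstruction you propose is guaranteed to vanish. The datum that actually can vary while keeping the Green functor and the $\Orb^\times_G$-restriction fixed is the norm shadows \emph{on homology} themselves. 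The paper's construction (\cref{non_uniqueness_ku}) sets $A_\bullet(K)=B_\bullet(K)\otimes\QQ[\beta_K]$ using the resolution of \cref{constructH0resolution} and, crucially, declares $n_L^K(\beta_L)=0$ for all $L<K$. Objectwise the homology is still $V_K[\beta]$, so the Green functor and naive image are unchanged; but the norm shadows on $H_*(\Theta(\ku_G))$ are restrictions of the injective field maps on $H_*(\Theta(\KU_G))$ and send $\beta\mapsto\beta$, so $H_*(A_\bullet)\not\cong H_*(\Theta(\ku_G))$ as diagrams and the non-equivalence is immediate, with no obstruction theory required. Your own observation that invertibility of $\beta$ forces periodic norm shadows to send $\beta$ to a unit is precisely the right dichotomy; the paper exploits it directly at the level of the homology diagram, where $\beta$ can be sent to zero only in the connective case.
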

This type of behavior is typical for increasing algebraic structure: for example, the wedge of Eilenberg--MacLane spectra $\bigvee_{n} \Sigma^{2n}H\QQ$ is rationally equivalent to $\KU_\QQ$ but this does not extend to an equivalence of ring spectra.  The example of $\ku_{\QQ,G}$ is an explicit illustration of the additional rigidity of  weak equivalences that preserve genuine-commutative ring structure, in contrast to those that preserve only naive commutativity.

\subsection*{Notation and Conventions}
Throughout the paper we assume that $G$ is a finite group.  We use the notation $\A(G)$ for the algebraic model of rational $G$-spectra, $\A(E_\infty^1(G))$ for the algebraic model of rational naive-commutative ring $G$-spectra, and $\A(E_\infty^G(G))$  for the algebraic model of rational genuine-commutative ring $G$-spectra; see Definition \ref{def:AlgMod}, Definition \ref{def:naiveAlgMod}, and Definition \ref{def:genuineAlgMod} respectively. If $X$ is a rational naive-commutative ring $G$-spectrum then $\theta(X)$ denotes its derived image in $\A(E_\infty^1(G))$. If $X$ is a rational genuine-commutative ring $G$-spectrum then we denote by $\Theta(X)$ its derived image in $\A(E_\infty^G(G))$ and by $\theta(X)$  its derived image in $\A(E_\infty^1(G))$, after forgetting part of the structure.

As we exclusively work in the rationalized context, to avoid notational clutter we leave the rationalization of a spectrum implicit in our notation.  Hence $\KU_{\QQ,G}$ will typically be denoted by $\KU_G$, and likewise $\ku_{\QQ,G}$ will be denoted by $\ku_G$.
However, we maintain the rationalization in the notation for categories; for example $\SH_\QQ$ denotes the rational stable homotopy category. The only exception is in the notation for the algebraic models, which do not have non-rational counterparts.

Finally, we use $\simeq$ to denote a zig-zag of Quillen equivalences between model categories or an equivalence between $\infty$-categories.

\subsection*{Acknowledgments}
We thank Hausdorff Research Institute for Mathematics in Bonn for their hospitality in hosting the Women in Topology III workshop, where this research began. Funding for the workshop was also provided in part by Foundation Compositio Mathematica and the National Science Foundation of the United States. NSF support was via the grants NSF-DMS 1901795 and NSF-HRD 1500481:\ AWM ADVANCE. The first author was partially supported by NSF Grant DMS-1710534. The fourth author was supported by  NWO Veni Grant 639.031.757.

We also thank Mike Hill for many useful conversations and Christian Wimmer for sharing a draft of his work. The first author thanks Spencer Dowdall for acting as a notational sounding-board.

\section{Preliminaries}\label{sect:reviewalgmodels}

In this section we recall the framework of algebraic models for rational $G$-spectra. We begin with the classical, non-equivariant story.

The process of rationalization drastically simplifies stable homotopy theory. This philosophy dates back at least to Serre.  Serre's computations of stable homotopy groups of spheres \cite{Serre1951} imply there is an equivalence between the rational stable homotopy category $\SH_\QQ$ and graded $\bQ$-vector spaces $\gr(\bQ\text{-mod})$ given by taking homotopy groups
\[ \pi_*(-)\colon
 \SH_\QQ \lra \gr(\bQ\text{-mod}).\]
This result can be lifted to an equivalence of homotopy theories, either at the level of $\infty$-categories or model categories. For example, Shipley \cite{ShipleyHZ} constructed a zig-zag of symmetric monoidal Quillen equivalences between rational spectra and rational chain complexes
\[\Sp_\bQ \simeq \Ch_\bQ. \]
Since these Quillen equivalences are symmetric monoidal, they induce an equivalence between ring spectra and differential graded algebras. Moreover, if $R$ is a rational ring spectrum, we get an induced equivalence between $R$-modules in spectra and corresponding modules in rational chain complexes. A bit more work shows that $E_\infty$-algebras in the two categories are also equivalent. They are modeled by algebras over the commutative operad  on both sides and work of Richter and Shipley \cite{RichterShipley} provides a zig-zag of Quillen equivalences between them.

Rational equivariant stable homotopy theory can similarly be encoded in algebraic models.  For equivariant spectra, homotopy groups have a richer structure given by homotopy Mackey functors.
Roughly speaking, a $G$-Mackey functor $M$ is a collection of abelian groups indexed by subgroups $H$ of $G$, together with transfer, restriction, and conjugation maps between them satisfying certain conditions. Taking homotopy groups produces an equivalence between the rational $G$-equivariant stable homotopy category and the category of rational graded $G$-Mackey functors
\[\{\pi_*^H(-)\}_{H\leq G}\colon G\SH_\QQ \lra \gr\mack(G)_\bQ.
\]
Mackey functors are purely algebraic, but can be fairly complex---see, for example, \cite{Webb_guide} for an introduction to the subject. The complexity of Mackey functor structure comes from the fact that the restrictions and transfers arising from subgroups are interrelated.  Over the rational numbers, the category of Mackey functors simplifies. Greenlees and May \cite[Appendix A]{GM_Tate} showed that rational $G$-Mackey functors split into the product over conjugacy classes of subgroups in $G$ of $\bQ[W_GH]$-modules,
\[\mack(G)_\bQ \cong \prod_{(H),\, H\leq G} \bQ[W_GH]\text{-mod},
\]
where $W_GH$ is the Weyl group of the subgroup $H$ in $G$, i.e. $W_GH=N_GH/H$.
What is more, this splitting is compatible with the corresponding splitting at the level of rational $G$-spectra using the idempotents of the rational Burnside ring for $G$. For a modern account see \cite{BarnesKedziorek}.

Combining the above, we get an equivalence of categories
\[G\SH_\QQ \simeq  \prod_{(H),\, H\leq G} \gr(\bQ[W_GH]\text{-mod}).\]
This result was later lifted to a zig-zag of Quillen equivalences between model categories by Schwede--Shipley \cite{ss03stabmodcat} and Barnes \cite{barnessplitting}, and to a zig-zag of symmetric monoidal Quillen equivalences by K\k{e}dziorek \cite{KedziorekExceptional}.
\begin{thm}\label{thm:GAlgMod}
There exists a zig-zag of symmetric monoidal Quillen equivalences
\[G\Sp_\bQ \simeq  \prod_{(H),\,H\leq G} \Ch(\bQ[W_GH]),
\]
where the symmetric monoidal product on the right-hand side is given by
$\otimes_\QQ$ on each component of the product.
\end{thm}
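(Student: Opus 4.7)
The plan is to assemble the equivalence from three ingredients and then verify that the composite is symmetric monoidal at the level of Quillen equivalences. First, I would invoke the splitting of rational $G$-spectra induced by the primitive idempotents of the rational Burnside ring $A(G)_\QQ$. By Greenlees--May \cite{GM_Tate}, these idempotents $e_H$ are indexed by conjugacy classes $(H)$ of subgroups of $G$, and the work of Barnes \cite{barnessplitting}, building on Schwede--Shipley \cite{ss03stabmodcat}, lifts the resulting splitting of homotopy categories to a zig-zag of Quillen equivalences
\[
G\Sp_\QQ \simeq \prod_{(H),\,H \le G} e_H G\Sp_\QQ.
\]

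Second, I would identify each localized component $e_H G\Sp_\QQ$ with modules over a rational ring spectrum that records the Weyl group action. Conceptually, localization at $e_H$ destroys all geometric fixed-point information except at (conjugates of) $H$, leaving only the residual $W_GH$-action; the resulting category is Quillen equivalent to modules over the Eilenberg--MacLane spectrum $H\QQ[W_GH]$. Third, I would apply Shipley's symmetric monoidal zig-zag \cite{ShipleyHZ} between rational $HR$-module spectra and $\Ch(R)$, specialized to $R = \QQ[W_GH]$, to land in $\Ch(\QQ[W_GH])$ on each factor.

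The principal obstacle is upgrading the splitting in the first step to a \emph{symmetric monoidal} zig-zag of Quillen equivalences rather than a merely additive one. On the product side the monoidal product is componentwise, while on $G\Sp_\QQ$ the smash product a priori mixes components; one must show that cross terms $e_H X \wedge e_K Y$ with $(H) \ne (K)$ vanish rationally, and moreover that this vanishing is realized by a symmetric monoidal model structure transported across the splitting. This refinement is exactly what K\k{e}dziorek \cite{KedziorekExceptional} carries out. Once the splitting is symmetric monoidal, steps two and three are symmetric monoidal by construction (the second being essentially Morita-theoretic for a commutative ring spectrum, the third being the main result of \cite{ShipleyHZ}), and composing the three zig-zags produces the stated equivalence.
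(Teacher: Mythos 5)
The paper does not actually prove \cref{thm:GAlgMod}; it records it as a known result, citing exactly the chain you reconstruct (Greenlees--May for the rational Mackey functor splitting, Schwede--Shipley and Barnes for the model-categorical lift of the idempotent splitting, Shipley's $H\QQ$-theorem, and K\k{e}dziorek for the symmetric monoidal refinement). Your outline is a faithful reconstruction of that argument, and you correctly identify the real content to be the monoidal upgrade of the splitting, which is what \cite{KedziorekExceptional} supplies.

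One imprecision worth flagging in your second step: you describe the component at $(H)$ as modules over $H\QQ[W_GH]$ and call the Morita step ``for a commutative ring spectrum,'' but $\QQ[W_GH]$ is not commutative in general, so the smash product over $H\QQ[W_GH]$ is not the relevant symmetric monoidal structure. The symmetric monoidal structure that the zig-zag must carry is the one coming from the Hopf algebra structure on $\QQ[W_GH]$, i.e.\ $\otimes_\QQ$ with the diagonal $W_GH$-action on both spectra and chain complexes. This is precisely the structure asserted on the right-hand side of the theorem, so the correct intermediate picture is ``rational spectra with a $W_GH$-action, symmetric monoidal over $H\QQ$,'' rather than modules over a commutative ring spectrum in the usual Morita sense. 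With that adjustment, your reconstruction matches the cited proof.
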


\begin{defn}\label{def:AlgMod}
The category  $\prod_{(H),\,H\leq G} \Ch(\bQ[W_GH])$ with the objectwise projective model structure is an \emph{algebraic model for rational $G$-spectra} and we denote it by $\A(G)$.
If $X$ is a rational $G$-spectrum, then we denote by $\theta(X)$ its derived image in $\A(G)$.
\end{defn}

As in the non-equivariant case, these symmetric monoidal Quillen equivalences induce a zig-zag of Quillen equivalences between equivariant ring spectra and algebras in $\A(G)$.  Additionally, if $R$ is an equivariant rational ring spectrum, we get an induced equivalence between $R$-modules in spectra and corresponding modules in the algebraic model $\A(G)$.

These parallels between the equivariant and non-equivariant pictures in the associative case belie the fact that the case of commutative equivariant spectra is significantly more complicated than the non-equivariant one. To begin, in the equivariant world there are distinct, non-equivalent levels of commutativity, which all forget down to $E_\infty$-structure in $\Sp$. These different levels of commutativity are modeled by Blumberg and Hill's $N_\infty$-operads \cite{BlumbergHill}. There are two special cases of $N_\infty$-operads which are important for this paper: the \emph{genuine-commutative} operad, denoted by $E_\infty^G$, and the \emph{naive-commutative} operad, denoted by $E_\infty^1$. Algebras over these operads in $G$-spectra are called \emph{genuine-commutative ring spectra} and \emph{naive-commutative ring spectra},
respectively. In particular, genuine-commutative ring spectra are characterized by having all the norm maps between their homotopy groups.  This implies that the collection of homotopy groups of a genuine-commutative ring spectrum forms a graded Tambara functor \cite{AngeltveitBohmann}.

On the other hand, there is only one level of commutativity in the algebraic model $\A(G)$: a differential graded algebra is either graded-commutative or it is not. It turns out that commutative algebras in the algebraic model $\A(G)$ classify the naive-commutative rational ring $G$-spectra.

\begin{thm}[{\cite{BarnesGreenleesKedziorek}}]\label{thm:naiveAlgMod}
There exists a zig-zag of Quillen equivalences from naive-com\-mu\-ta\-tive rational ring $G$-spectra to the commutative algebras in the algebraic model $\A(G)$,
\[\Enaivealg(G\Sp_\bQ) \simeq \Comm\text{-}\alg(\A(G))= \prod_{(H),\, H\leq G} \CDGA(\bQ[W_GH]).
\]
\end{thm}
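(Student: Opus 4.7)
The plan is to lift the symmetric monoidal zig-zag of Quillen equivalences from \cref{thm:GAlgMod} to the level of commutative algebras, and then to identify $E_\infty^1$-algebras in $G\Sp_\bQ$ with strictly commutative algebras on the algebraic side. Concretely, the zig-zag supplied by K\k{e}dziorek \cite{KedziorekExceptional} presents
\[G\Sp_\bQ \; \simeq \; \prod_{(H),\,H\leq G} \Ch(\bQ[W_GH])\]
as a chain of symmetric monoidal Quillen equivalences. Each such equivalence ought to descend to a Quillen equivalence between commutative monoid objects, and the composite chain will then give the desired equivalence after identifying the two notions of commutativity on each end.

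The first step would be to replace the underlying model structures by positive flat variants (in the sense of Shipley, adapted equivariantly) wherever needed, so that the category of commutative monoids in each model category appearing in the zig-zag inherits a transferred model structure. One then checks, for each symmetric monoidal left Quillen functor $F$ in the zig-zag, that $F$ is sufficiently well-behaved—cofibrant enough in a technical sense—to preserve the relevant cofibrant commutative monoids up to weak equivalence and to satisfy the Schwede--Shipley type criteria that upgrade a symmetric monoidal Quillen equivalence to a Quillen equivalence of commutative monoids. This is precisely the pattern that works for the non-equivariant Richter--Shipley passage from $\Sp_\bQ$ to $\Ch_\bQ$, and one implements it link by link in the equivariant zig-zag.

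Next, on the spectral side one identifies $\Enaivealg(G\Sp_\bQ)$ with commutative monoids: since $E_\infty^1$ is built from the non-equivariant $E_\infty$-operad regarded with trivial $G$-action, an $E_\infty^1$-algebra in $G\Sp_\bQ$ is an algebra for the commutative operad viewed as a $G$-trivial operad in $G$-spectra. In any positive flat model of (symmetric) $G$-spectra, such algebras are Quillen equivalent to strictly commutative monoids; this is the equivariant analogue of the classical rational $E_\infty$-vs-$\Comm$ comparison, and uses only rational coefficients together with the trivial $G$-action on the operad. On the algebraic side, commutative monoids in the product $\prod_{(H)}\Ch(\bQ[W_GH])$ split coordinatewise as $\prod_{(H)}\CDGA(\bQ[W_GH])$, because the symmetric monoidal structure on the product is defined componentwise via $\otimes_\QQ$.

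The main technical obstacle is the first step: establishing that each symmetric monoidal Quillen equivalence in the zig-zag actually lifts to commutative monoids. The delicate points are the existence of the transferred model structures, the flatness conditions needed for the left adjoints to preserve weak equivalences on cofibrant commutative algebras, and keeping track of derived functors through several intermediate categories (orthogonal $G$-spectra, symmetric $G$-spectra, modules over the rational equivariant sphere, and finally the algebraic model). Once these checks go through—as carried out in \cite{BarnesGreenleesKedziorek}—the composite of the lifted Quillen equivalences, combined with the $E_\infty^1$-to-$\Comm$ comparison over $\QQ$, yields the stated equivalence.
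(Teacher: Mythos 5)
The paper does not prove \cref{thm:naiveAlgMod}; it is a citation of the main theorem of Barnes--Greenlees--K\k{e}dziorek, so there is no in-text proof to compare against. What I can do is assess your outline on its own terms.

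Your plan of lifting the symmetric monoidal zig-zag from \cref{thm:GAlgMod} to commutative monoids and then reconciling the two notions of commutativity is indeed the right overall shape of the argument, and your discussion of transferred model structures and Schwede--Shipley-type criteria for upgrading symmetric monoidal Quillen equivalences to commutative monoids is the standard and correct machinery. The splitting of commutative monoids in the product $\prod_{(H)}\Ch(\bQ[W_GH])$ into $\prod_{(H)}\CDGA(\bQ[W_GH])$ is also correct.

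However, there is a genuine gap in the step identifying $E_\infty^1$-algebras with strictly commutative monoids. You assert that in ``any positive flat model of (symmetric) $G$-spectra,'' algebras over the $G$-trivial commutative operad are Quillen equivalent to strictly commutative monoids, by analogy with the non-equivariant rational $E_\infty$-vs-$\Comm$ comparison. This is false as stated, and the failure is precisely the content of the $N_\infty$-hierarchy discussed in \cref{sect:reviewalgmodels}: in the positive complete stable model structure on orthogonal $G$-spectra, strictly commutative monoids model $E_\infty^G$-algebras, i.e.\ genuine-commutative ring spectra with all norm maps, not $E_\infty^1$-algebras. If your claim held, there would be no distinction between naive and genuine commutativity and \cref{thm:genuineAlgMod} and \cref{thm:naiveAlgMod} would coincide, contradicting the whole point of this paper. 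The correct version requires choosing the model structure carefully --- one whose positive cofibrations are generated using only trivial $G$-representations, so that the indexed smash powers appearing in free commutative monoids are homotopically controlled without introducing norms. This is exactly the delicate point that Barnes--Greenlees--K\k{e}dziorek address, and the ``equivariant analogue'' you invoke is not automatic but is the theorem being proved. Your outline should flag which model structure on $G$-spectra is being used and why its commutative monoids are $E_\infty^1$-algebras rather than $E_\infty^G$-algebras; without that, the argument proves the wrong theorem.
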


\begin{rem}\label{rem:naive_orb_model}
Let $\Orb_G$ denote the orbit category of $G$, whose objects are cosets $G/H$ for all $H\leq G$ and whose morphisms are $G$-equivariant maps. Denote by $\Orb^\times_G$ the wide subcategory of $\Orb_G$ consisting of only isomorphisms. Then the category of functors from $\Orb^\times_G$ to rational commutative differential graded algebras is equivalent to the algebraic category given in Theorem \ref{thm:naiveAlgMod}, that is
\[
[\Orb^\times_G, \CDGA(\bQ)] \cong  \prod_{(H),\,H\leq G} \CDGA(\bQ[W_GH]).
\]
Notice that we recover the Weyl group actions by making the usual identification of equivariant automorphisms of $G/H$ and the Weyl group $W_GH$, and then looking at the images in $\CDGA(\bQ)$. We take this presentation as our algebraic model for naive-commutative rational ring $G$-spectra.
\end{rem}

\begin{defn}\label{def:naiveAlgMod}
The category $[\Orb^\times_G, \CDGA(\bQ)]$ with the objectwise projective model structure is an \emph{algebraic model for naive-commutative rational ring $G$-spectra} and we denote it by $\A(E_\infty^1(G))$.
If $X$ is a naive-commutative rational ring $G$-spectrum then we denote by $\theta(X)$ its derived image in $\A(E_\infty^1(G))$.
\end{defn}

Recent work of Wimmer \cite{Wimmer} incorporates the additional structure of norm maps into the above algebraic model, providing an algebraic model for genuine-commutative rational ring $G$-spectra. In this case, the model is implemented by an equivalence of $\infty$-categories.
\begin{thm}[{\cite{Wimmer}}]\label{thm:genuineAlgMod}
There is an equivalence of $\infty$-categories
\[ \EGalg(G\Sp_\bQ) \simeq  [\Orb_G, \CDGA(\bQ)].
\]
\end{thm}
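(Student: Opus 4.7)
The plan is to extend the naive-commutative algebraic model of Theorem \ref{thm:naiveAlgMod} (in the orbit-category form of Remark \ref{rem:naive_orb_model}) by systematically incorporating the norm maps as the additional non-isomorphism morphisms in $\Orb_G$. The construction proceeds via geometric fixed points: for a genuine-commutative ring $G$-spectrum $R$, each $\Phi^H R$ is a commutative ring spectrum with residual $W_G H$-action, since $\Phi^H$ is symmetric monoidal. Combining with the Greenlees-May rational splitting used in Theorem \ref{thm:GAlgMod} and the Richter-Shipley equivalence between rational $E_\infty$-ring spectra and rational CDGAs, the assignment $H \mapsto \Phi^H R$ produces a CDGA over $\QQ$ for each object of $\Orb_G$, recovering the naive-commutative model on the wide subcategory $\Orb^\times_G$.

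The heart of the argument is to promote this assignment to a functor on all of $\Orb_G$ and to identify its essential image. A non-isomorphism $G/H \to G/K$ in $\Orb_G$ is generated by the canonical projections corresponding to subgroup inclusions $H \leq K$ together with the automorphisms of each orbit, and I would send such a morphism to the multiplicative norm $N_H^K \colon \Phi^H R \to \Phi^K R$ available because $R$ is an $E_\infty^G$-algebra. The work here is to verify that the relations among norms, conjugations, and composition demanded by the $N_\infty$-operad structure on $E_\infty^G$ match exactly the functoriality relations in $\Orb_G$. Rationally, where additive transfers become clean sums over Weyl orbits and the Burnside-ring idempotents give a subgroup-by-subgroup decomposition, these checks reduce to manipulations with CDGAs over $\QQ[W_G H]$ rather than delicate operadic homotopy theory.

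Finally, I would prove that the induced functor $\EGalg(G\Sp_\bQ) \to [\Orb_G, \CDGA(\bQ)]$ is an equivalence of $\infty$-categories. Essential surjectivity should follow by reconstructing a genuine-commutative ring $G$-spectrum from a prescribed diagram via inductive assembly up the subgroup lattice, gluing fixed-point data along isotropy-separation squares with the norms serving as the gluing data. Full faithfulness reduces, via the rational splitting, to a product of mapping-space computations in $\CDGA(\bQ[W_G H])$ once the naive-commutative case (Theorem \ref{thm:naiveAlgMod}) is in hand, augmented by the constraints coming from the new morphisms. The main obstacle I anticipate is the rigorous bookkeeping of the Tambara-type reciprocity relations encoded by $E_\infty^G$: because norms are genuinely non-additive, one must argue carefully that $\Orb_G$-functoriality of the algebraic diagram captures precisely this structure, with no hidden higher homotopy data lost in translation and no spurious relations introduced. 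Navigating this on the $\infty$-categorical side, rather than just at the level of homotopy categories, is where I expect the bulk of the technical effort to concentrate.
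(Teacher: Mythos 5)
This theorem is cited from Wimmer's paper and is not proved in the present paper; the attribution ``[Wimmer]'' in the theorem header signals an external result, so there is no proof in the paper to compare your argument against line by line. That said, your outline does appear to track the broad contours of Wimmer's construction as they are reflected in the proof of Lemma~\ref{lem:diagramOfAlgMod}: there one sees a functor $\Phi^-_{\Comm}$ landing in $[\Orb_G,\Comm\Sp_\QQ]$, followed by a Richter--Shipley-type comparison $\widetilde{\Theta}$ to $[\Orb_G,\CDGA(\QQ)]$, with the commutation of the relevant squares justified by the symmetric monoidality of geometric fixed points and by Wimmer's Proposition~4.5. So the skeleton you propose --- $H\mapsto\Phi^H R$ as a CDGA-valued diagram, the norm shadows $N_H^K$ supplying the non-isomorphism arrows, Richter--Shipley to pass from rational $E_\infty$-rings to CDGAs --- is the right one.

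Two cautionary remarks on the details, since you were working blind. First, the norm shadow $\Phi^H R\to\Phi^K R$ is not a multiplicative norm map in the Hill--Hopkins--Ravenel sense applied directly to fixed points; rather it arises by applying $\Phi^K$ to the $E_\infty^G$-structure map $N_H^K\operatorname{res}_H R\to R$ and then using the diagonal equivalence $\Phi^K N_H^K\simeq\Phi^H$. Your write-up elides this, and it is precisely where the $E_\infty^G$-operad structure (as opposed to, say, an intermediate $N_\infty$-operad) is consumed. Second, your claim that full faithfulness ``reduces to a product of mapping-space computations'' via the rational splitting undersells the genuine $\infty$-categorical content: one has to show that the geometric-fixed-point diagram functor is itself an equivalence (not merely conservative and essentially surjective after passing to homotopy categories), and this requires controlling the higher coherences encoded by $\Orb_G$-functoriality, which is where Wimmer's actual argument lives. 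Your sketch correctly flags this as the bulk of the technical effort, but it cannot be dispatched at the level of detail you give.

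Since the theorem is an imported result, the correct posture here is simply to cite Wimmer rather than to reprove it; what the present paper proves is Lemma~\ref{lem:diagramOfAlgMod}, which is the compatibility statement needed to use Wimmer's equivalence alongside the naive-commutative model.
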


\begin{defn}\label{def:genuineAlgMod}
The  infinity category $[\Orb_G, \CDGA(\bQ)]$ is an \emph{algebraic model for genuine-commutative rational ring $G$-spectra} and is denoted $\A(E_\infty^G(G))$.
If $X$ is a genuine-commutative rational ring $G$-spectrum then we denote by $\Theta(X)$ its derived image in $\A(E_\infty^G(G))$.
\end{defn}

Comparing \cref{thm:genuineAlgMod} with \cref{rem:naive_orb_model}, we see that the additional norm map structure on genuine-commutative ring spectra is present in the algebraic model as maps between the CDGAs corresponding to non-conjugate subgroups of $G$. We can understand such maps as follows. Suppose $L$ and $K$ are non-conjugate subgroups and there is a map $f\colon G/L\to G/K$ in $\Orb_G$ with $eL\mapsto gK$. The equivariance of this map implies $g^{-1}Lg\leq K$, and so we can factor $f$ as an isomorphism composed with a quotient map, that is, as the composition of
\begin{align*}G/L\to G/(g^{-1}Lg),&\quad  eL\mapsto g(g^{-1}Lg)\quad \text{ and }\\
G/(g^{-1}Lg)\to G/K, &\quad e(g^{-1}Lg)\mapsto eK.
\end{align*}
Thus, as compared to an object in $\A(E_\infty^1(G))$, the additional structure of an object in $\A(E_\infty^G(G))$ is given by the image of the quotient maps. We refer to this additional structure as the \emph{shadows of the norm maps}; an individual such map is a \emph{norm shadow}.

\begin{defn}\label{def:shadowsofnorms}
Suppose $H\lneq K \leq G$. We refer to the image of the quotient map $p\colon G/H\to G/K$, $p(eH)=eK$ under a functor in $\A(E_\infty^G(G))$ as a \emph{norm shadow}.
\end{defn}

\begin{rem} Unlike the actual norm maps on homotopy groups of a genuine-commutative ring $G$-spectrum, the shadows of the norm maps are maps of commutative algebras, so in particular they are maps of degree $0$ that are both additive and multiplicative. They are also appropriately equivariant, which in this context means that they commute with the images of the conjugacy maps in $\Orb_G$.
\end{rem}

\begin{rem}\label{rem:abelian_construction} Our focus in this paper is when $G$ is a finite abelian group. In this case, the action of conjugation on the collection of subgroups of $G$ is trivial, and so every map in $\Orb_G$ can be factored as an automorphism followed by a quotient map. The automorphisms are again given by elements of Weyl groups, but now the normalizer of any subgroup is the entire group $G$. Hence to define an object in $\A(E_\infty^G(G))$ when $G$ is abelian, we can specify an object of $\CDGA(\QQ[G/H])$ for each subgroup $H\leq G$, together with $G$-equivariant maps of $\CDGA$s whenever there is proper subgroup containment, subject to the requirement that these maps form a functor out of the subgroup lattice of $G$.
These maps are exactly the shadows of the norm maps. 
 We use this perspective to define an object in $\A(E_\infty^G(G))$ in Section \ref{sect:goodresolutions}.
\end{rem}

The algebraic model for naive-commutative rational ring $G$-spectra may also be derived  from the work of Wimmer at the level of $\infty$-categories. Hence combining the algebraic models for naive- and genuine-commutative ring $G$-spectra with the algebraic model for rational $G$-spectra produces a commuting diagram of $\infty$-categories.

\begin{lemma}\label{lem:diagramOfAlgMod}
The following diagram of $\infty$-categories, in which all vertical arrows denote forgetful functors,
\[\xymatrix{ G\Sp_\bQ \ar[r]^{\theta} & \A(G)\\
\Enaivealg(G\Sp_\bQ) \ar[r]^{\theta} \ar[u] & \A(E_\infty^1(G))\ar[u] \\
\EGalg(G\Sp_\bQ) \ar[r]^{\Theta}  \ar[u]& \A(E_\infty^G(G)), \ar[u] }
\]
commutes up to a natural equivalence.
\end{lemma}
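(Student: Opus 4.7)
The plan is to verify each of the two squares in the displayed diagram separately and then paste them together.

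For the top square, I would observe that the equivalence of \cref{thm:naiveAlgMod} is obtained from \cref{thm:GAlgMod} by applying the symmetric monoidal Quillen equivalences to the commutative algebra objects on each side. Since each lax symmetric monoidal functor in the zig-zag commutes strictly with the forgetful functor from commutative algebras to the underlying category, the top square commutes at the level of model categories, and hence also at the level of underlying $\infty$-categories after inverting the weak equivalences.

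For the bottom square, the main input is the construction of Wimmer's equivalence in \cite{Wimmer}. I would check that his equivalence is built so that the forgetful functor $\EGalg(G\Sp_\bQ)\to \Enaivealg(G\Sp_\bQ)$ corresponds on the algebraic side to the restriction of functors along the inclusion $\Orb^\times_G\hookrightarrow \Orb_G$. Combined with the identification of $[\Orb^\times_G,\CDGA(\bQ)]$ with $\A(E_\infty^1(G))$ from \cref{rem:naive_orb_model}, this yields commutativity of the bottom square up to natural equivalence.

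The main obstacle is reconciling the two different presentations of the naive-commutative algebraic model: \cref{thm:naiveAlgMod} is realized via a zig-zag of Quillen equivalences between model categories, whereas Wimmer's bottom equivalence lives natively at the $\infty$-categorical level. The hard step is therefore to verify that, after passing to underlying $\infty$-categories, the Barnes--Greenlees--K\k{e}dziorek equivalence agrees with the restriction of Wimmer's equivalence to naive-commutative algebras. I expect this to fall out of a uniqueness argument: both functors exhibit $[\Orb^\times_G,\CDGA(\bQ)]$ as the $\infty$-category of commutative algebras in $\A(G)$, and commutative algebra objects are preserved and detected by any symmetric monoidal equivalence of the underlying $\infty$-categories, so there is essentially no room for the two presentations to disagree up to coherent natural equivalence.
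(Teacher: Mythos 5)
Your overall decomposition into the two stacked squares is natural and your diagnosis of the "hard step" is accurate: the real difficulty is reconciling the BGK model-categorical presentation of $\A(E_\infty^1(G))$ with the one obtained by restricting Wimmer's $\infty$-categorical equivalence along $\Orb^\times_G\hookrightarrow\Orb_G$. Where you and the paper diverge is in how this is handled, and this matters.

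The paper avoids your proposed "uniqueness" argument entirely by inserting an intermediate column. It factors each horizontal functor through the diagram categories $[\Orb^\times_G,\Sp_\QQ]$, $[\Orb^\times_G,\Comm\Sp_\QQ]$, and $[\Orb_G,\Comm\Sp_\QQ]$ using geometric fixed points $\Phi^-$ (Wimmer's first step), and only then applies the Shipley and Richter--Shipley zig-zags objectwise. This produces four small squares: the two right-hand ones commute because the Richter--Shipley and Shipley zig-zags are compatible with forgetting commutativity and extend to diagram categories; the top-left commutes because geometric fixed points are symmetric monoidal; and the bottom-left is deduced from the commutativity of the left outer rectangle, which is exactly \cite[Proposition 4.5]{Wimmer}. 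The upshot is that both rows of the naive square are presented \emph{uniformly} in the Wimmer style, so there is no reconciliation problem: the comparison of BGK's model with Wimmer's is precisely the content of \cite[Proposition 4.5]{Wimmer}, and the paper simply cites it. Your proposed uniqueness argument is plausible but incomplete as written: asserting that "there is essentially no room for the two presentations to disagree" skips over the fact that $[\Orb^\times_G,\CDGA(\bQ)]$ does have nontrivial symmetric monoidal self-equivalences, so one really needs that the space of functors \emph{together with the comparison equivalence to $\A(G)$} is contractible, not merely that any two such functors are abstractly equivalent. Without pinning this down, the bottom square argument has a genuine gap; the paper closes it by using the existing comparison in the literature rather than reproving it.
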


\begin{proof}
The above diagram can be rewritten as the composite of the following squares.
\[\xymatrix{ G\Sp_\bQ \ar[r]^-{\Phi^-}  & [\Orbx_G, \Sp_\QQ] \ar[r]^-{\widetilde{\theta}} & \A(G)\\
\Enaivealg(G\Sp_\bQ) \ar[r]^-{\Phi^-} \ar[u] & [\Orbx_G, \Comm\Sp_\QQ] \ar[r]^-{\widetilde{\theta}} \ar[u] & \A(E_\infty^1(G))\ar[u] \\
\EGalg(G\Sp_\bQ) \ar[r]^-{\Phi_{\Comm}^-} \ar[u] & [\Orb_G, \Comm\Sp_\QQ] \ar[r]^-{\widetilde{\Theta}}  \ar[u]& \A(E_\infty^G(G)) \ar[u] }
\]
Here the right horizontal maps $\widetilde{\theta}$ and $\widetilde{\Theta}$ at the $\infty$-category level are the maps on diagrams $[\Orbx_G,-]$ and $[\Orb_G,-]$ induced by the zig-zags of Quillen equivalences from \cite{ShipleyHZ} between $\Sp_\QQ$ and $\Ch_\QQ$, and from \cite{RichterShipley} between $\Comm\Sp_\QQ$ and $\CDGA(\bQ)$.  Commutativity of the square with the two maps $\widetilde{\theta}$ follows, since the diagram at the level of spectra commutes by \cite{RichterShipley}. The bottom right square commutes, since the zig-zag of Quillen equivalences given in \cite{RichterShipley} extends to the category of diagrams of commutative spectra indexed by $\Orbx_G$ and by $\Orb_G$.

The top left square commutes because the geometric fixed points functors used in \cite{Wimmer} form a symmetric monoidal functor and $\Comm\Sp \simeq E_\infty^1\Sp$.

The left outer rectangle commutes up to a natural equivalence by \cite[Proposition 4.5]{Wimmer}.
The natural equivalence constructed there lifts to an equivalence of rational commutative spectra, and thus the bottom left square commutes.
\end{proof}

We finish this section with a simple example that illustrates the difference between the algebraic models for naive- and genuine-commutative ring $G$-spectra.

\begin{ex} Let $G=C_2$ and take an object $(\bQ[x], \bQ[x])$ in $\A(E_\infty^1(G))$, with the trivial Weyl group action. This corresponds to a naive-commutative rational ring $G$-spectrum, which we call $X$. To obtain a genuine-commutative rational ring $G$-spectrum we have to specify a norm shadow $n\colon \bQ[x] \lra \bQ[x]$. For example, two possible choices are to define $n=\id_{\bQ[x]}$ or to define $n$ to be the $\bQ$-map which sends $x$ to $0$. Each of these two choices for $n$ defines a rational genuine-commutative ring $G$-spectrum.  These spectra are not weakly equivalent as genuine-commutative ring $G$-spectra, but they both restrict to the same naive-commutative ring $G$-spectrum $X$.  In other words, the genuine-commutative structure is not uniquely determined by the homotopy groups nor by the underlying naive-commutative ring $G$-spectrum.
\end{ex}


\section{The image of a spectrum in the algebraic model}
\label{sect:strategy}
In this section we delve further into the properties of the algebraic models in order to explain methods for calculating the derived image of a given spectrum.

\subsection{Formality results in the algebraic models}
Recall that the derived comparison functor in \cref{thm:GAlgMod} from $G\Sp_\bQ$ to $\A(G)$ is denoted $\theta$. This functor is determined abstractly, and for $X$ a rational $G$-spectrum, it does not provide an explicit description of $\theta(X) \in \A(G)$.  Nevertheless, we have the following formula linking homology and homotopy groups
\[  H_*(\theta_K (X)) \cong \pi_*(\Phi^K(X)),
\]
where $\Phi^K(X)$ denotes the geometric $K$-fixed points of $X$ for $K\leq G$, and for ease of notation we have denoted the value of the functor $\theta(X)$ at $K$ by $\theta_K(X)$.  This identification allows for an explicit description in the presence of formality.  Concretely, in the case where the chain complex $\theta_K (X)$ is formal for every $K\leq G$, i.e. $\theta_K (X) \simeq H_*(\theta_K (X))$ in $\Ch(\bQ[W_GK])$, these weak equivalences at each $K\leq G$ assemble to produce a description of the derived image of $X$ in the algebraic model $\A(G)$.

By construction, a similar statement is true for naive-commutative rational ring $G$-spectra. That is, let $X$ be a rational naive-commutative ring $G$-spectrum and let $\theta(X)$ denote its derived image in the algebraic model $\A(E_\infty^1(G))$. Then we have the same formula linking homology and homotopy groups: for each $K\leq G$,
\[  H_*(\theta_K (X)) \cong \pi_*(\Phi^K(X)),
\]
where $\Phi^K(X)$ again denotes the geometric $K$-fixed points of $X$.  If it happens that for every $K\leq G$ the CDGA $\theta_K (X)$ is formal, i.e. $\theta_K (X) \simeq H_*(\theta_K (X))$ in $\CDGA(\bQ[W_GK])$,  we obtain a description of the derived image of $X$ in the algebraic model  $\A(E_\infty^1(G))$.

In \cite{BHIKM}, we used this method to calculate the derived images of $H\rep_G$, $\KU_G$ and $\ku_G$ in the algebraic model $\A(E^1_\infty(G))$.  The spectrum $H\rep_G$ is the Eilenberg--MacLane spectrum for the complex representation ring Mackey functor $\rep_G$, and $\underline{\pi}_0 KU_G = \rep_G$.  We obtained the following calculations of these CDGAs and their homology. These calculations serve as the input for the computations in the genuine algebraic model.
\begin{thrm}[\cite{BHIKM}]\label{mainresultsprevpaper} For a finite abelian group $G$, there are weak equivalences in  $\A(E^1_\infty(G))$ making the following identifications for each $K\leq G$:
\begin{align*}
&\theta_K(H\rep_G)\simeq H_*(\theta_K(H\rep_G))\simeq V_K\\
&\theta_K(\KU_G)\simeq H_*(\theta_K(\KU_G)) \simeq V_K[\beta^{\pm 1}]\\
&\theta_K(\ku_G)\simeq H_*(\theta_K(\ku_G))\simeq V_K[\beta],
\end{align*}
where the $\QQ[W_GK]$-module $V_K$ is given by
\[V_K =\begin{cases} \QQ(\zeta_n)  & \text{if $K$ is cyclic of order $n$}\\
0 & \text{else}.  \end{cases}
\]
Here $\QQ(\zeta_n)$ is the field extension of $\QQ$ by a primitive $n$th root of unity and $\beta$ is the Bott element in degree two.  The $W_GK$-actions on $V_K$ and the generators $\beta$ are trivial in all cases.
\end{thrm}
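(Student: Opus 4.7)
The plan is to establish all three formality statements in parallel by first computing the homotopy groups of geometric fixed points, then identifying the underlying CDGAs with their cohomology. Throughout, I would rely on the fundamental formula $H_*(\theta_K(X)) \cong \pi_*(\Phi^K(X))$ that is built into the construction of the algebraic model.

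First I would compute $\pi_*(\Phi^K(X))$ for each spectrum. Since $G$ is finite abelian, the rational $G$-spectrum category splits via the Burnside-ring idempotents as a product indexed by (conjugacy classes of) cyclic subgroups, and this splitting is compatible with $\Phi^K$. For $\KU_G$, Segal's classical computation \cite{Segal} gives $\pi_*^G(\KU_G) \cong \RU(G)[\beta^{\pm 1}]$, where $\beta$ is the Bott element in degree $2$. Rationally, $\RU(G)\otimes\QQ$ decomposes as a product $\prod_{K} \QQ(\zeta_{|K|})$ indexed by cyclic subgroups $K \leq G$, with the factor for $K$ recovered from the geometric $K$-fixed points. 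This identifies $\pi_*(\Phi^K(\KU_G)) \cong V_K[\beta^{\pm 1}]$, and in particular vanishing when $K$ is not cyclic. An analogous argument gives the corresponding identifications for $\ku_G$ and $H\rep_G$.

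Second I would establish formality. For non-cyclic $K$, the CDGA $\theta_K(X)$ is acyclic, hence trivially equivalent to $0=V_K$. For cyclic $K$, the cohomology is the free (or Laurent) polynomial algebra $V_K[\beta]$ or $V_K[\beta^{\pm 1}]$ on a single even-degree generator over a characteristic-zero field; such CDGAs are formal by Sullivan's theory of minimal models, since there are no Massey-type obstructions concentrated in even degrees. Concretely, I would choose a cycle representative $\tilde\beta$ of $\beta$ in $\theta_K(X)$ and construct a CDGA map from the free (resp.\ Laurent) algebra on $\tilde\beta$ into $\theta_K(X)$, verifying that the induced map on cohomology agrees with the isomorphism identified in the first step. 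For $H\rep_G$ the cohomology is concentrated in degree $0$, so the quasi-isomorphism onto $V_K$ is automatic.

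The main obstacle is the $W_GK$-equivariance. Because $W_GK$ acts trivially on $V_K$ and on $\beta$ in cohomology, the representatives in $\theta_K(X)$ must be chosen to be $W_GK$-fixed. Fortunately $\QQ[W_GK]$ is semisimple, so the cycle space decomposes into isotypic summands and I can extract an honest $W_GK$-invariant cycle representative of $\beta$, which then yields an equivariant quasi-isomorphism. The identification $V_K\cong \QQ(\zeta_n)$ for $K$ cyclic of order $n$ comes from recognizing the relevant idempotent summand of $\RU(C_n)\otimes\QQ$ as the cyclotomic field generated by the character of the faithful representation, after absorbing the action of $\mathrm{Aut}(C_n)$ which is responsible for the Galois descent back to $\QQ$.
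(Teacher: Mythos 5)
This theorem is cited from \cite{BHIKM} rather than proved in the present paper, so a direct comparison to "the paper's proof" is really a comparison to the resolution technology the paper imports and mirrors in \cref{constructH0resolution} and \cref{lem:goodres}. Against that standard, your first and third steps are essentially right, but the formality step has a genuine gap.

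Your computation of $\pi_*(\Phi^K)$ via the Burnside idempotent splitting and Segal's description of $\RU_{\QQ}$ is sound (though the spectrum-level splitting is indexed by conjugacy classes of \emph{all} subgroups, with only the cyclic ones contributing nonzero factors for $K$-theory). Your handling of Weyl-equivariance via semisimplicity of $\QQ[W_GK]$ is exactly the Averaging Lemma (\cref{lem:averagingtrick}) and is fine.

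The gap is in "choose a cycle representative $\tilde\beta$ and map the free (resp.\ Laurent) algebra on $\tilde\beta$ into $\theta_K(X)$." This only addresses the polynomial generator. The target homology is $V_K[\beta^{\pm 1}]$ with $V_K = \QQ(\zeta_n)$, and $\QQ(\zeta_n) = \QQ[x]/\Phi_n(x)$ is \emph{not} a free graded-commutative $\QQ$-algebra, so there is no way to map it into an arbitrary CDGA $A_\bullet$ simply by choosing representatives: a lift of $\zeta_n$ to a cycle $\overline{x}\in Z_0(A_\bullet)$ will not in general satisfy $\Phi_n(\overline{x})=0$, and the kernel of $Z_0(A_\bullet)\to H_0(A_\bullet)$ need not be nilpotent, so étale/Hensel lifting is unavailable. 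The appeal to Sullivan minimal models does not rescue this; that theory is set up for coaugmented/simply-connected CDGAs, and here the nontrivial structure is concentrated precisely in $H_0$. The same issue recurs for the Laurent factor: $\QQ[\beta^{\pm 1}]$ is not free either, and a cycle $\tilde\beta$ representing an invertible homology class need not itself be invertible.

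What is actually needed, and what the present paper constructs in \cref{constructH0resolution} and \cref{thm:addingbetas}, is a Koszul-type cofibrant replacement: resolve $\QQ(\zeta_n)$ by $\QQ[x]\otimes E(t)$ with $d(t)=\Phi_n(x)$, so that after choosing $\overline{x}$ one only needs $\Phi_n(\overline{x})$ to be a \emph{boundary}, witnessed by some $\overline{t}\in A_1$; and resolve $\QQ[\beta^{\pm 1}]$ by $\QQ[\gamma,\bar\gamma]\otimes E(y)$ with $d(y)=\gamma\bar\gamma - 1$, so that invertibility on homology suffices. Your proof would go through once you replace "the free algebra on $\tilde\beta$" with such a Koszul resolution of the entire target $V_K[\beta^{\pm 1}]$, choose Weyl-fixed representatives via averaging, and produce the zig-zag through the resolution rather than a direct map.
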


When working with genuine-commutative ring spectra, by construction and by \cref{lem:diagramOfAlgMod}, we again have the formula linking homology and homotopy groups: for each $K\leq G$,
 \[  H_*(\Theta_K (X)) \cong \pi_*(\Phi^K(X)),
\]
where $\Phi^K(X)$ denotes the geometric $K$-fixed points of $X$ and $\Theta$ is the $\infty$-functor from \cref{thm:genuineAlgMod}; again $\Theta_K(X)$ denotes the value of $\Theta(X)$ at $K\leq G$.

One would like to use formality to find the image $\Theta(X)$ for a genuine-commutative ring spectrum $X$. This time, however, $\Theta(X)$ lives in the $\infty$-category $\A(E_\infty^G(G))=[\Orb_G, \CDGA(\bQ)]$. Thus the correct notion of ``formality'' is formality of the entire diagram, which cannot be separated into formality at each individual subgroup $K\leq G$ as before.   Nevertheless, if we can show the diagrammatic formality result $\Theta(X) \simeq H_*(\Theta(X))$ in $\A(E_\infty^G(G))$, then we do obtain a description of the derived image of $X$ in the algebraic model  $\A(E_\infty^G(G))$.  This is the procedure we implement in the remainder of the paper.  We construct a zig-zag of weak equivalences in $\A(E_\infty^G(G))$ from $\Theta(KU_G)$ to $H_*(\Theta(KU_G))$ in order to identify the derived image of $\KU_G$.

Our strategy will be to first prove formality of $\Theta(H\rep_G)$ via a Koszul type resolution and then lift this to resolutions giving formality of $\Theta(\KU_G)$ and $\Theta(\ku_G)$. Note this is not the most efficient way to prove formality for $\Theta(H\rep_G)$. There is a standard formality argument for any Eilenberg--MacLane spectrum using the $(-1)$-connected cover, but this approach cannot be extended to $\Theta(\KU_G)$. For clarity we include the truncation argument below along with an explanation why this simple approach cannot work for $\Theta(\KU_G)$ or  $\Theta(\ku_G)$.

\subsection{Formality for Eilenberg--MacLane spectra}
For any $A_\bullet \in \dga$ (not necessarily commutative) we can form the $(-1)$-connected cover $\tau_{\geq 0}A_\bullet$, which is zero in negative degrees, has the zero cycles $Z_0$ of $A_\bullet$ in degree zero, and is $A_i$ in degree $i$ for $i > 0$.  As in \cite{BarnesRoitzheim}, for example, this truncation can be used to show that any $\dga$ with homology concentrated in degree zero is formal. The zig-zag of quasi-isomorphisms is given by the inclusion and the quotient map to homology
\begin{center}
\begin{tikzcd}
A_\bullet \& \tau_{\geq 0} A_\bullet \arrow{r}{\simeq} \arrow{l}[swap]{\simeq} \& H_*(A_\bullet).
\end{tikzcd}
\end{center}
The second map exists because the homology is concentrated in degree zero.

In fact, given a diagram $A_\bullet \in \A(E_\infty^G(G))$  with homology concentrated in degree zero, this argument easily extends to give formality of the diagram.
\begin{prop}
Let $G$ be any finite group. Suppose $A_\bullet \in \A(E_\infty^G(G))$ has homology concentrated in degree zero.  Then $A_\bullet$ is formal.
\end{prop}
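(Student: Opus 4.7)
The plan is to upgrade the classical truncation argument recalled immediately above the statement from a zigzag of CDGAs to a zigzag of diagrams in $\A(E_\infty^G(G)) = [\Orb_G, \CDGA(\bQ)]$. The key point is that the operations involved (taking the $(-1)$-connected cover, taking $H_0$, the canonical inclusion, and the canonical quotient) are all functorial at the level of $\CDGA(\bQ)$ itself, so they can be applied pointwise to a functor out of $\Orb_G$ without any compatibility data to check.

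First I would verify that $\tau_{\geq 0}$ is a well-defined endofunctor of $\CDGA(\bQ)$. Given a CDGA $A$ over $\bQ$, the graded $\bQ$-module with $A_i$ in degree $i \geq 1$, $Z_0(A)$ in degree $0$, and $0$ in negative degrees, is closed under the differential of $A$ (the differential out of degree $1$ lands in boundaries, which are zero cycles) and under multiplication (products of zero cycles are zero cycles by the Leibniz rule). Any morphism of CDGAs preserves zero cycles, so it restricts to a morphism between truncations. The analogous, even simpler, checks make $H_0 \colon \CDGA(\bQ) \to \CDGA(\bQ)$ a functor landing in CDGAs concentrated in degree zero, and the inclusion $\iota\colon \tau_{\geq 0}A \hookrightarrow A$ and quotient $q\colon \tau_{\geq 0}A \twoheadrightarrow H_0(A)$ are natural transformations.

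Applying $\tau_{\geq 0}$, $H_0$, $\iota$ and $q$ pointwise to $A_\bullet \colon \Orb_G \to \CDGA(\bQ)$ produces a zigzag
\[ A_\bullet \xleftarrow{\;\iota\;} \tau_{\geq 0}A_\bullet \xrightarrow{\;q\;} H_0(A_\bullet) \]
of natural transformations in $\A(E_\infty^G(G))$. By hypothesis, $A_\bullet(G/H)$ has homology concentrated in degree zero for every $H \leq G$, so at each $G/H$ the map $\iota$ is a quasi-isomorphism (no homology is lost in non-positive degrees, since there is none in negative degrees) and $q$ is a quasi-isomorphism (since higher homology vanishes). Under the objectwise projective model structure presenting the $\infty$-category $\A(E_\infty^G(G))$, pointwise quasi-isomorphisms are weak equivalences, so this zigzag realises the desired equivalence $A_\bullet \simeq H_*(A_\bullet)$.

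I do not anticipate a genuine obstacle: once $\tau_{\geq 0}$ is known to be functorial on $\CDGA(\bQ)$, the diagrammatic extension is automatic precisely because the ordinary truncation argument already proceeds through natural maps in the CDGA variable. The only point worth stating carefully is this functoriality, which is why the proof works uniformly for every finite $G$ and every diagram shape, but it contrasts sharply with the situation for $\KU_G$ or $\ku_G$, where homology is not concentrated in a single degree and a more delicate resolution is needed.
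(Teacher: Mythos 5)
Your proof is correct and follows essentially the same route as the paper: both arguments apply the classical two-step truncation zig-zag objectwise and then check that the resulting maps assemble into morphisms of $\Orb_G$-diagrams, which boils down to naturality of $\tau_{\geq 0}$, $H_0$, and the inclusion/quotient maps on $\CDGA(\bQ)$. You make the functoriality explicit as the organizing principle, whereas the paper verifies the corresponding commuting square with the norm-shadow maps directly; the mathematical content is the same.
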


\begin{proof}
Since $A_\bullet$ has homology concentrated in degree zero, at each conjugacy class $(K)$ we have a zig-zag as above. If $G/J\to G/K$ is a map in $\Orb_G$ and $n^K_J\colon A_\bullet(J) \to A_\bullet(K)$ is the induced map on CDGAs, then we have a commutative diagram
\begin{center}
\begin{tikzcd}
A_\bullet(K) \& \tau_{\geq 0} A_\bullet(K) \arrow{r}{\simeq} \arrow{l}[swap]{\simeq} \& H_*(A_\bullet(K))\\
A_\bullet(J) \arrow{u}{n^K_J}\& \tau_{\geq 0} A_\bullet(J) \arrow{u}{\tilde{n}^K_J} \arrow{r}{\simeq} \arrow{l}[swap]{\simeq} \& H_*(A_\bullet(J)) \arrow{u}{H_*(n^K_J)}
\end{tikzcd}
\end{center}
where the middle map is simply the restriction of $n_J^K$ to the truncation.
\end{proof}

\begin{cor}\label{cortambarafunctors}
Let $G$ be a finite group and let $\M$ be a Tambara functor.  There is a unique genuine-commutative ring structure on $H\M_\QQ$, the Eilenberg--MacLane spectrum for $\M \otimes \QQ$.
\end{cor}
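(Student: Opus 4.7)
The plan is to derive this corollary directly from the preceding proposition by showing that for any rational genuine-commutative ring $G$-spectrum $X$ with underlying homotopy Mackey functor $\M\otimes\QQ$ concentrated in degree zero, the diagram $\Theta(X)\in\A(E_\infty^G(G))$ has homology concentrated in degree zero. Once this is established, the proposition immediately gives formality, and then identification of $H_*(\Theta(X))$ purely from the Tambara structure finishes the argument.

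First, I would note that existence of at least one genuine-commutative ring structure on $H\M_\QQ$ follows from $\M$ being a Tambara functor: one can specify a diagram in $[\Orb_G, \CDGA(\QQ)]$ whose value at $G/K$ is the commutative $\QQ$-algebra $(\M\otimes\QQ)(G/K)$ in degree zero, with morphisms given by the restrictions and norms of the Tambara structure, and this defines a corresponding genuine-commutative ring spectrum via \cref{thm:genuineAlgMod}. So the substantive content is uniqueness.

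For uniqueness, let $X$ be any rational genuine-commutative ring $G$-spectrum with $\underline{\pi}_*(X)\cong \M\otimes\QQ$ (i.e.\ concentrated in degree zero). Using the formula $H_*(\Theta_K(X))\cong \pi_*(\Phi^K(X))$ together with the Greenlees--May splitting of rational $G$-Mackey functors, I would argue that the hypothesis $\underline{\pi}_*(X)$ concentrated in degree zero forces $\pi_*(\Phi^K(X))$ to be concentrated in degree zero for every $K\leq G$. Hence $\Theta(X)$ has homology concentrated in degree zero and the preceding proposition gives a zig-zag of weak equivalences $\Theta(X)\simeq H_*(\Theta(X))$ in $\A(E_\infty^G(G))$. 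But $H_*(\Theta(X))$ is a diagram $\Orb_G\to \CDGA(\QQ)$ whose values are commutative $\QQ$-algebras concentrated in degree zero; all of its data is determined by the rationalized Tambara structure on $\M\otimes\QQ$. Thus any two such $X$ have canonically equivalent images under $\Theta$, and by \cref{thm:genuineAlgMod} they are weakly equivalent as genuine-commutative ring spectra.

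The main (mild) obstacle I anticipate is cleanly justifying the step that $\pi_*(\Phi^K(X))$ is concentrated in degree zero whenever $\underline{\pi}_*(X)$ is; this requires invoking compatibility between the Greenlees--May splitting and the comparison $\theta$ of \cref{thm:GAlgMod}, and then propagating that statement through the forgetful functors in \cref{lem:diagramOfAlgMod} to land in $\A(E_\infty^G(G))$. Once this bookkeeping is in place, the rest of the argument is formal.
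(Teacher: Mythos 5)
Your uniqueness argument follows the paper's proof exactly: observe that the homology of $\Theta(X)$ is concentrated in degree zero, apply the preceding proposition to conclude formality, and finish with \cref{thm:genuineAlgMod}. The paper's proof is terser (it does not spell out the step from $\underline\pi_*(X)$ concentrated in degree zero to $\pi_*(\Phi^K(X))$ concentrated in degree zero, which you rightly flag as the thing to justify via the rational splitting), but the strategy is the same.

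Your existence paragraph, however, contains a conceptual slip. You propose to define a diagram $\Orb_G\to\CDGA(\QQ)$ whose value at $G/K$ is $(\M\otimes\QQ)(G/K)$, with morphisms given by ``restrictions and norms.'' This does not match the algebraic model. First, the value of $\Theta(X)$ at $K$ computes $\pi_*(\Phi^K(X))$, which for $X=H\M_\QQ$ is the rational geometric-fixed-point (Brauer-quotient) datum at $K$, not the Mackey-functor value $\M(G/K)$ itself; these generally differ. Second, the only non-isomorphism morphisms in $\Orb_G$ are quotient maps $G/L\to G/K$ with $L<K$, so the diagram encodes only the norm shadows, and there is no place in the diagram for restrictions. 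Consequently the diagram you describe is not $\Theta(H\M_\QQ)$, and the spectrum it would produce under \cref{thm:genuineAlgMod} would not have homotopy $\M\otimes\QQ$. Existence here is better handled by citing Ullman's equivalence between $G$-Tambara functors and genuine-commutative Eilenberg--MacLane ring $G$-spectra, as the paper does; your uniqueness argument then shows the rational genuine-commutative structure is determined by the underlying Tambara functor.
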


\begin{proof}
Since we are working with an Eilenberg--MacLane spectrum, the image in the algebraic model $\Theta(H\M_\QQ)$ has homology concentrated in degree zero.  By the previous proposition, it is formal. Thus by \cref{thm:genuineAlgMod} the genuine-commutative ring structure on $H\M_\QQ$ is uniquely determined.
\end{proof}

This corollary is a special case of Ullman's work \cite{Ullman} in which he shows that the category of $G$-Tambara functors is equivalent to the homotopy category of genuine-commutative Eilenberg--MacLane ring $G$-spectra, without rationalization.  In the rational case, this result is a straightforward consequence of the existence and the simplicity of the algebraic model.

In particular, \cref{cortambarafunctors} implies that $\Theta(H\rep_G)$ is formal. Unfortunately, the above method does not extend to $\Theta(\KU_G)$ because $\Theta(\KU_G)$ has nonzero homology in negative degrees, so no map from the connective cover can induce an isomorphism on homology. One might hope to use something like the above argument for the connective cover $\Theta(\ku_G)$ and then invert a representative for $\beta$ afterwards, but even for the connective cover, this argument fails because we are not guaranteed any map $\tau_{\geq 0} \Theta(\ku_G) \to H_*(\Theta(\ku_G))$. Thus we need a different approach to show formality.

\section{Constructing a nice resolution of $H_*(\Theta(H\rep_G))$}\label{sect:goodresolutions}
We now give an approach to formality for the Eilenberg--MacLane spectrum $\Theta(H\rep_G)$ that can be extended to give formality for $\Theta(\KU_G)$.  We construct a resolution $B_\bullet$ for the homology of $\Theta(H\rep_G)$ that has the additional property that it admits a map to any diagram $A_\bullet$ whose degree zero homology is the same as $H_0(\Theta(H\rep_G))$, and that this map induces an isomorphism on homology in degree zero. We then use this construction to prove formality of $\Theta(\KU_G)$ in \cref{sec:5}.

We first make the following observation to simplify our constructions. The diagram $\Theta(H\rep_G)$ comes with a Weyl group action at each subgroup. As observed in \cite[Remark 4.4]{BHIKM}, if the group is finite abelian, then the action on homology is trivial for all subgroups. Note this does not imply the action on the $\CDGA$ at each subgroup is trivial. However, the following lemma shows we can always choose representatives for homology classes that are fixed. Similarly, if a boundary element is fixed, we can always find an element in its preimage under the differential that is fixed under the Weyl group action.

\begin{lemma}[Averaging]\label{lem:averagingtrick} \cite[Lemma 5.2]{BHIKM}
Let $A_\bullet$ be in $\CDGA(\bQ[W_GK])$, and suppose a homology class $x\in H_*(A_\bullet)$ is fixed under the $W_GK$-action.  Then $x$ has a representative $a \in A_\bullet$ that is fixed under the $W_GK$-action. Similarly if $y\in A_\bullet$ and $d(y)$ is fixed under the $W_GK$-action, then there exists a fixed element $b$ such that $d(b)=d(y)$.
\end{lemma}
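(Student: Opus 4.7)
The plan is to use the standard averaging trick available because we work rationally over the finite group $W_GK$. Since $|W_GK|$ is invertible in $\bQ$, the Maschke-style averaging operator $e = \tfrac{1}{|W_GK|}\sum_{g\in W_GK} g$ defines an idempotent projection onto the $W_GK$-fixed points that commutes with the differential (since the $W_GK$-action on $A_\bullet$ is by $\CDGA$ maps, in particular by chain maps).

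For the first claim, I would start by picking any representative $a'\in A_\bullet$ of the homology class $x$, with no symmetry assumed. Define
\[
a \;=\; \frac{1}{|W_GK|}\sum_{g\in W_GK} g\cdot a'.
\]
By a reindexing argument (substituting $h\mapsto gh$) the element $a$ is manifestly $W_GK$-fixed. To see that $[a]=x$, observe that for each $g$ we have $[g\cdot a']=g\cdot[a']=g\cdot x = x$ in homology because $x$ is fixed; averaging classes that all equal $x$ gives $x$.

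For the second claim, apply the same averaging to $y$: set
\[
b \;=\; \frac{1}{|W_GK|}\sum_{g\in W_GK} g\cdot y.
\]
Then $b$ is fixed, and since $d$ is $W_GK$-equivariant,
\[
d(b) \;=\; \frac{1}{|W_GK|}\sum_{g\in W_GK} g\cdot d(y) \;=\; \frac{1}{|W_GK|}\sum_{g\in W_GK} d(y) \;=\; d(y),
\]
where the middle equality uses the hypothesis that $d(y)$ is already $W_GK$-fixed.

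There is essentially no obstacle here; the only thing to verify carefully is that the $W_GK$-action is compatible with the differential (which is built into the definition of $\CDGA(\bQ[W_GK])$) and that $\bQ$ is characteristic zero so $\tfrac{1}{|W_GK|}$ makes sense. The argument goes through verbatim in any characteristic coprime to $|W_GK|$, but rationally this is automatic.
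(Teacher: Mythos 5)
Your proof is correct and is precisely the standard Maschke-style averaging argument that the lemma's name advertises; the paper itself defers to \cite[Lemma 5.2]{BHIKM}, where this same averaging over $W_GK$ (using that $|W_GK|$ is invertible in $\QQ$ and the action commutes with $d$) is the underlying mechanism. Nothing to add.
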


We will construct our diagram $B_\bullet$ so that $B_\bullet(K)$ has a trivial $W_GK$-action for all subgroups $K\leq G$. When we construct the map to $A_\bullet$ where $H_0(A_\bullet)\cong H_*(\Theta(H\rep_G))$, it will always be assumed that the representatives of homology classes we choose are fixed so that the map is equivariant at each level. The above lemma ensures this is possible and we use this ``Averaging Lemma'' throughout this section without further mention.

\begin{const} \label{constructH0resolution}
For an abelian group $G$, let $B_\bullet\in \A(E^G_\infty(G))$ be the diagram of rational CDGAs defined as follows.

For $K$ a cyclic subgroup of $G$, define the free graded-commutative algebra
\[B_\bullet(K)=\bigotimes_{\substack{\text{$p$ prime}\\p\mid \abs{K}}} \bigotimes_{\substack{L\leq K,\, L\neq e\\\text{$L$ cyclic $p$-group}}} \QQ[x_L]\otimes E(t_L)\]
with $\abs{x_L}=0$, $\abs{t_L}=1$, and then define differentials
\[d(t_L)=\begin{cases} \Phi_p(x_{L}) & L\cong C_p\\ x_{L'}-x_L^p & L\cong C_{p^r} \text{ with maximal proper subgroup $L'\cong C_{p^{r-1}}$}.  \end{cases}\]
We view $B_\bullet(K)$ as a $\QQ[G/K]$-CDGA with trivial action.  Observe that because the empty tensor product is the unit CDGA $\QQ$, at the trivial subgroup this construction reduces to $B_\bullet(e)=\QQ$.

For $K$ a non-cyclic subgroup of $G$, define
\[B_\bullet(K)=M_\bullet \otimes \bigotimes_{\substack{\text{$p$ prime}\\p\mid \abs{K}}}\bigotimes_{\substack{L\leq K,\, L\neq e\\\text{$L$ cyclic $p$-group}}} \QQ[x_L]\otimes E(t_L)\]
where the differentials on the generators $x_L$ and $t_L$ are as in the cyclic subgroup case and where $M_\bullet$ is the CDGA
\[M_\bullet=E(a)\]
with $\abs{a}=1$ and $d(a)=1$, which implies the homology of $M_\bullet$ is the zero ring.  Again, regard $B_\bullet(K)$ as a $\QQ[G/K]$-CDGA with trivial $G/K$ action.

We next specify the norm shadows $n_L^K\colon B_\bullet(L)\to B_\bullet(K)$ arising from subgroup inclusions $L\leq K$.  Such an inclusion implies that $\abs{L}\mid \abs{K}$ and that any cyclic $p$-subgroup of $L$ is also a cyclic $p$-subgroup of $K$.  We may thus define any $n_L^K$ via inclusions of tensor products over smaller sets into tensor products over larger sets.
That is, for a cyclic $p$-subgroup $J$ of $L$ (and thus of $K$), we send the generators $x_{J}, t_{J}\in B_\bullet(L)$ to the similarly named generators in $x_{J}, t_{J}\in B_\bullet(K)$. In the case that $L$ is not cyclic, it must also be that $K$ is not cyclic, so we just send the extra exterior generator $a\in B_\bullet(L)$ to the corresponding generator $a\in B_\bullet(K)$. This commutes with the differentials by direct inspection.

These definitions of $n_L^K$ are clearly functorial in the subgroup lattice of $G$, and thus define an $\Orb_G$-diagram in $\cdga$, as discussed in \cref{rem:abelian_construction}.  
\end{const}

\begin{lemma}\label{lem:goodreshomology} The homology of the complex $B_\bullet(K)$ defined in  \cref{constructH0resolution} is concentrated in degree 0, where it is given by
\[ H_0(B_\bullet(K))=\begin{cases}  \QQ(\zeta_{\abs{K}}) & \text{ $K$ cyclic}\\
0 & \text{ $K$ non-cyclic.}
\end{cases}\]
\end{lemma}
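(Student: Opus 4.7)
The plan is to compute the homology of $B_\bullet(K)$ by repeated use of the Künneth formula, reducing to one prime at a time, and then recognizing each prime piece as the Koszul complex on an explicit regular sequence whose quotient ring is identifiable via a standard cyclotomic identity. The non-cyclic case will then be immediate from the fact that the extra factor $M_\bullet = E(a)$ with $d(a)=1$ is acyclic.

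First I would dispose of the non-cyclic case. Since $M_\bullet$ has $d(a)=1$, the unit $1$ is a boundary and hence $H_*(M_\bullet) = 0$. As we are working over $\QQ$, the Künneth formula gives $H_*(B_\bullet(K)) = H_*(M_\bullet) \otimes H_*(\text{rest}) = 0$ for $K$ non-cyclic.

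For the cyclic case, write $K$ as the internal direct sum of its Sylow $p$-subgroups, so $|K| = \prod_p p^{r_p}$ where $p^{r_p} = |K|_p$. The cyclic $p$-subgroups of $K$ are exactly $L_1^{(p)} \leq L_2^{(p)} \leq \cdots \leq L_{r_p}^{(p)}$ with $L_i^{(p)} \cong C_{p^i}$. The differentials in $B_\bullet(K)$ do not mix generators associated to different primes, so Künneth over $\QQ$ gives
\[
H_*(B_\bullet(K)) \;\cong\; \bigotimes_{p \mid |K|} H_*\!\left( \bigotimes_{i=1}^{r_p} \QQ[x_{L_i^{(p)}}] \otimes E(t_{L_i^{(p)}}) \right).
\]
Fix a prime $p$ and abbreviate $x_i = x_{L_i^{(p)}}$, $t_i = t_{L_i^{(p)}}$, $r = r_p$. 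The $p$-piece is the free exterior algebra $E(t_1,\ldots,t_r)$ over the polynomial ring $A = \QQ[x_1,\ldots,x_r]$ with differential $d(t_1) = \Phi_p(x_1)$ and $d(t_i) = x_{i-1} - x_i^p$ for $i \geq 2$. This is by definition the Koszul complex on the sequence
\[
f_1 = \Phi_p(x_1), \quad f_2 = x_1 - x_2^p, \quad \ldots, \quad f_r = x_{r-1} - x_r^p
\]
in $A$. I claim this sequence is regular. Indeed, quotienting by $f_1$ gives $\QQ(\zeta_p)[x_2,\ldots,x_r]$ (a domain); then quotienting by $f_2 = \zeta_p - x_2^p$ gives $\QQ(\zeta_p)[x_2,\ldots,x_r]/(x_2^p - \zeta_p)$, still a domain since $x_2^p - \zeta_p$ is irreducible over $\QQ(\zeta_p)$ (it is the minimal polynomial of $\zeta_{p^2}$); inductively each subsequent $f_i$ is a nonzerodivisor in a domain. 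Hence the Koszul complex has homology concentrated in degree $0$ equal to $A/(f_1,\ldots,f_r)$.

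Finally I identify this quotient. The relations allow us to express $x_i = x_r^{p^{r-i}}$ for all $i$, reducing the quotient to $\QQ[x_r]/\bigl(\Phi_p(x_r^{p^{r-1}})\bigr)$. The standard cyclotomic identity
\[
\Phi_p(x^{p^{r-1}}) \;=\; \frac{x^{p^r}-1}{x^{p^{r-1}}-1} \;=\; \Phi_{p^r}(x)
\]
then gives $A/(f_1,\ldots,f_r) \cong \QQ(\zeta_{p^r})$. Reassembling over all primes and using $\QQ(\zeta_m) \otimes_\QQ \QQ(\zeta_n) \cong \QQ(\zeta_{mn})$ for coprime $m,n$ (linear disjointness of cyclotomic fields of coprime conductors) yields $H_0(B_\bullet(K)) \cong \QQ(\zeta_{|K|})$ with vanishing higher homology, as claimed.

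The only real subtlety is the regularity of the Koszul sequence together with the cyclotomic identity identifying the quotient; everything else is a routine application of Künneth.
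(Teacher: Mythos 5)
Your proof is correct and follows essentially the same route as the paper's: dispose of the non-cyclic case via $H_*(M_\bullet)=0$ and K\"unneth, split the cyclic case by primes, identify each $p$-piece as a Koszul complex on a regular sequence, and use the identity $\Phi_p(x^{p^{k-1}})=\Phi_{p^k}(x)$ to recognize the degree-zero quotient. The only difference is that you spell out the regularity argument (irreducibility of $x^p-\zeta_{p^{i-1}}$ at each stage) which the paper dismisses as ``a standard regular sequence argument.''
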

\begin{proof}
Consider first the case where $K$ is cyclic, so \[B_\bullet(K)=\bigotimes_{\substack{\text{$p$ prime}\\p\mid \abs{K}}} \bigotimes_{\substack{L\leq K,\, L\neq e\\\text{$L$ cyclic $p$-group}}} \QQ[x_L]\otimes E(t_L).\]
The differentials do not mix the tensor factors corresponding to distinct primes $p$, so we may write this CDGA as a tensor product of CDGAs corresponding to each prime divisor of $\abs{K}$.  As $K$ is cyclic, it has a unique maximal cyclic $p$ subgroup, which we denote $K_p$, and all cyclic $p$ subgroups of $K$ are subgroups of $K_p$.  We may thus write
\[ B_\bullet(K)=\bigotimes_{\mathclap{p\text{ prime},\, p\mid\abs{K}}}\quad B_{\bullet}(K_p).\]
By the K\"unneth theorem, the homology of $B_\bullet(K)$ splits as the tensor product
\[H_*(B_\bullet(K))\cong \bigotimes_{p \mid \abs{K}} H_*(B_\bullet(K_p)).\]
For a fixed prime $p$, let $p^k$ be the maximal power of $p$ dividing $\abs{K}$.  There is then a unique cyclic $p$ subgroup of $K$ of order $p^r$ for $1\leq r\leq k$ and we may rewrite the CDGA $B_\bullet(K_p)$ as
\[ \bigotimes_{\substack{L\leq K_p,\, L\neq e\\\text{$L$ cyclic $p$-group}}} \QQ[x_L]\otimes E(t_L)=\QQ[x_p,x_{p^2}\dots, x_{p^k}]\otimes E(t_p,t_{p^2},\dots, t_{p^k})\]
with differentials $d(t_{p^r})=\begin{cases} \Phi_p(x_p) &r=1\\ x_{p^{r-1}}-(x_{p^r})^p & r>1\end{cases}.$

A standard regular sequence argument then proves that the homology of this complex vanishes in degrees different from 0, and in degree 0 is given by
\begin{align*}H_0(B_\bullet(K_p))&\cong \QQ[x_p,\dots,x_{p^k}]/\left(\Phi_p(x_p), x_p-(x_{p^2})^p,\dots, x_{p^{k-1}}-(x_{p^k})^p\right)\\
&\cong \QQ[x_p, x_{p^k}]/\left(\Phi_p(x_p), x_p-(x_{p^k})^{p^{k-1}}\right)\\
&\cong \QQ[x_{p^k}]/\left(\Phi_{p}((x_{p^k})^{p^{k-1}})\right)\\
&\cong \QQ(\zeta_{p^k}),
\end{align*}
where the last isomorphism follows from the cyclotomic polynomial identity
\[\Phi_{p}((x)^{p^{k-1}}) = \Phi_{p^k}(x).\] We therefore have that $H_0(B_\bullet(K))\cong \bigotimes_{p\mid \abs{K}}\QQ(\zeta_{\abs{K_p}})\cong\QQ(\zeta_{\abs{K}})$.

If $K$ is not cyclic, then
\[B_\bullet(K)=M_\bullet \otimes \bigotimes_{\substack{\text{$p$ prime}\\p\mid \abs{K}}}\bigotimes_{\substack{L\leq K,\, L\neq e\\\text{$L$ cyclic $p$-group}}} \QQ[x_L]\otimes E(t_L)\]
where the differentials do not mix the $M_\bullet$ tensor factor with the remaining factors.  Since $H_*(M_\bullet)=0$, the K\"unneth theorem implies that $H_*(B_\bullet(K))=0$.
\end{proof}

\begin{lemma}\label{lem:goodres} Let $A_\bullet\in \A(E^G_\infty (G))$ be any diagram whose zeroth homology satisfies
\[ H_0(A_\bullet(K))\cong\begin{cases}  \QQ(\zeta_{\abs{K}}) & \text{ $K$ cyclic}\\
0 & \text{ $K$ non-cyclic}
\end{cases}
\]
with trivial Weyl group actions on homology.
Then there is a map in $\A(E^G_\infty(G))$
\[\psi\colon B_\bullet \to A_\bullet\]
that induces an isomorphism on $H_0$, where the diagram $B_\bullet\in \A(E^G_\infty(G))$ is the one defined in \cref{constructH0resolution}.
\end{lemma}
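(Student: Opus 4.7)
The plan is to construct $\psi$ inductively over the subgroup lattice of $G$ ordered by cardinality, starting with $\psi_e\colon \QQ \to A_\bullet(e)$ as the unit map. At each $K$, the values of $\psi_K$ on the generators $x_L$ and $t_L$ for proper cyclic $p$-subgroups $L < K$ are forced by compatibility with the norm shadows: one must set $\psi_K(x_L) = n^K_L(\psi_L(x_L))$ and $\psi_K(t_L) = n^K_L(\psi_L(t_L))$, and these are independent of any choice of intermediate chain by functoriality of norm shadows together with the inductive hypothesis. The only generators still requiring a free choice are $x_K, t_K$ (when $K$ is a cyclic $p$-group) and the acyclifying generator $a$ (when $K$ is non-cyclic).

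For $K\cong C_{p^r}$, the hypothesis $H_0(A_\bullet(K))\cong \QQ(\zeta_{p^r})$ supplies the roots of unity one needs. When $r=1$, lift a primitive $p$-th root of unity to a $W_GK$-fixed element $\xi\in A_0(K)$ via \Cref{lem:averagingtrick}, set $\psi_K(x_K)=\xi$, and use that $\Phi_p(\xi)$ represents $0$ in $H_0$ to produce a fixed $\tau\in A_1(K)$ with $d(\tau)=\Phi_p(\xi)$, which becomes $\psi_K(t_K)$. When $r\geq 2$, the already-defined $\xi' = n^K_{K'}(\psi_{K'}(x_{K'}))$ represents a primitive $p^{r-1}$-th root of unity in $\QQ(\zeta_{p^r})\cong H_0(A_\bullet(K))$; choose a primitive $p^r$-th root $\alpha$ with $\alpha^p = [\xi']$, lift to a fixed $\xi\in A_0(K)$, and solve $d(\tau)=\xi' - \xi^p$ by averaging. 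For $K$ non-cyclic, the hypothesis $H_0(A_\bullet(K))=0$ means $1$ is a boundary, so one simply picks a $W_GK$-fixed $\alpha\in A_1(K)$ with $d(\alpha)=1$ to serve as $\psi_K(a)$.

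Extending $\psi_K$ multiplicatively over the free graded-commutative algebra $B_\bullet(K)$ yields a CDGA map; it is $W_GK$-equivariant because every generator has been lifted to a fixed element, and it commutes with norm shadows from cyclic subgroups by construction. That $H_0(\psi)$ is an isomorphism is then immediate: both sides vanish for non-cyclic $K$, while for cyclic $K$ the induced map is a $\QQ$-algebra map between copies of the field $\QQ(\zeta_{|K|})$ sending each $[\psi_K(x_{K_p})]$ to a primitive $|K_p|$-th root of unity; as $p$ ranges over the prime divisors of $|K|$ these classes generate the target, so the map is surjective and hence an isomorphism of fields.

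The main obstacle I anticipate is coordinating the choice of $\psi_K(a)$ across the non-cyclic sub-lattice: when two non-cyclic subgroups $L_1, L_2 < K$ have cyclic intersection, the norm-shadow images $n^K_{L_1}(\psi_{L_1}(a))$ and $n^K_{L_2}(\psi_{L_2}(a))$ need not agree on the nose. Resolving this requires arranging the induction so that prior choices of $\psi_L(a)$ can be adjusted by elements of $Z_1(A_\bullet(L))$ without disturbing constraints already in place, using the fact that any two lifts of $1$ under $d$ differ by a $1$-cycle, and invoking \Cref{lem:averagingtrick} throughout to preserve equivariance.
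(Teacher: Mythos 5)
Your proposal tracks the paper's proof closely: the same induction over the subgroup lattice, the same case split on $K$ being cyclic of prime order, cyclic of prime-power order, cyclic of composite order, or non-cyclic, the same use of the Averaging Lemma to keep choices Weyl-fixed, the same use of roots of unity in the cyclic cases, and the same ``map of fields, hence injective, hence bijective'' conclusion for $H_0(\psi_K)$.

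The coherence issue you flag at the end is genuine, but the sketch you give does not resolve it, and this is precisely the step that still needs an argument. If $L_1,L_2$ are non-cyclic proper subgroups of a non-cyclic $K$ with $L_1\cap L_2$ cyclic (e.g.\ two distinct index-two subgroups of $C_2\times C_2\times C_2$), functoriality forces $\psi_K(a)=n^K_{L_1}(\psi_{L_1}(a))=n^K_{L_2}(\psi_{L_2}(a))$, yet $\psi_{L_1}(a)$ and $\psi_{L_2}(a)$ were chosen independently. Replacing $\psi_{L_j}(a)$ by $\psi_{L_j}(a)+z_j$ with $z_j\in Z_1(A_\bullet(L_j))$ changes the $j$th right-hand side only by $n^K_{L_j}(z_j)$, and nothing guarantees that the images $n^K_{L_j}\bigl(Z_1(A_\bullet(L_j))\bigr)\subseteq A_1(K)$ are large enough to cancel the discrepancy between the two sides --- the norm shadows have no surjectivity one can invoke. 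For what it is worth, the paper's Case 4 proceeds much as you do (choose a Weyl-fixed $\overline{a}$ lifting $1$, then appeal to ``the same argument as in Case 3''), but the Case 3 computation only addresses the generators $x_{L'},t_{L'}$, which are by construction images of norm shadows, and does not literally cover the exterior generator $a$ of a non-cyclic proper $L<K$. So you have correctly identified the delicate point; ``adjust prior choices by $1$-cycles'' is a hope, not a proof, unless accompanied by an argument that the relevant cycle in $A_1(K)$ lies in the image of the relevant norm shadow, or unless $B_\bullet$ is modified so that the acyclifying generators at distinct non-cyclic subgroups are independent of one another.
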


\begin{proof}
We construct $\psi$ by induction over the lattice of subgroups of $G$.  At the trivial subgroup $B_\bullet(e)=\QQ$, so the CDGA map $\psi_e\colon B_\bullet(e)\to A_\bullet(e)$ must be the unit map.

Let $K$ be a subgroup of $G$.  By induction, we may assume that for all proper subgroups $L\leq K$, there are Weyl-equivariant CDGA maps $\psi_L\colon B_\bullet(L)\to A_\bullet(L)$ that induce isomorphisms on $H_0$.  Moreover, we may assume that whenever $G/L'\to G/L$ is a quotient map in $\Orb_G$, the diagram
\begin{equation}\label{normfunctorialitydiagram} \xymatrix{ B_\bullet(L) \ar[r]^{\psi_L} & A_\bullet(L)\\
B_\bullet(L') \ar[r]^{\psi_{L'}} \ar[u]^{n_{L'}^L} & A_\bullet(L')\ar[u]_{n_{L'}^L}
}\end{equation}
commutes.
The proof is in four cases, depending on the structure of the subgroup $K$.\medskip

\noindent \textbf{Case 1.} \emph{$K$ is cyclic of prime order, $K\cong C_p$.}  Then $B_\bullet(K)=\QQ[x_K]\otimes E(t_K)$.  By assumption, $H_0(A_\bullet(K))\cong \QQ(\zeta_p)$ is the $p$th cyclotomic field extension.  We may thus choose a (Weyl-fixed) element $\overline{x}_K\in A_0(K)$ that represents a primitive $p$th root of unity in homology.   The homology class $[\overline{x}_K]$ must satisfy the $p$th cyclotomic polynomial $\Phi_p$, and hence $\Phi_p(\overline{x}_K)$ is the boundary of an element $\overline{t}_K\in A_1(K)$, which we may choose to be Weyl-fixed.  The assignment $\psi_K(x_K)=\overline{x}_K$ and $\psi_K(t_K)=\overline{t}_K$ thus determines a well-defined equivariant map of CDGAs
\[ \psi_K\colon \QQ[x_K]\otimes E(t_K)\to A_\bullet(K).\]
The only subgroup of $K$ is $e$, and since $B_\bullet(e)$ is the unit CDGA $\QQ$, the diagram involving the norm shadows from $e$ commutes by the uniqueness of units.\medskip

\noindent \textbf{Case 2.} \emph{$K$ is cyclic of prime power order, $K\cong C_{p^r}$, $r>1$.}   To define a map out of
\[ B_\bullet(K)=\bigotimes_{L\leq K,\ L\neq e} \QQ[x_L]\otimes E(t_L)\]
it suffices to choose Weyl-fixed images of each $x_L$ and $t_L$ that are compatible with the differentials in $B_\bullet(K)$.  When $L$ is a proper subgroup of $K$, $x_L\in B_\bullet(K)$ is in the image of the  map $n_L^K\colon B_\bullet(L)\to B_\bullet(K)$ corresponding to the quotient map $G/L\to G/K$.  Hence, in order to ensure that $\psi_K$ commutes with the norm shadows $n_L^K$, we must define $\psi_K(x_L)=n_L^K(\psi_L(x_L))$ and $\psi_K(t_L)=n_L^K(\psi_L(t_L))$, where the norm shadows on the right-hand sides are those from the diagram $A_\bullet$.  To check these elements are Weyl-fixed, observe for any $g\in G$, the diagram
\[\xymatrix{ B_\bullet(L)\ar[d]_{[g]\cdot}\ar[r]^{\psi_L} & A_\bullet(L)\ar[r]^{n_L^K} \ar[d]_{[g]\cdot}& A_\bullet(K)\ar[d]_{[g]\cdot}\\
B_\bullet(L) \ar[r]^{\psi_L}&A_\bullet(L)\ar[r]^{n_L^K} &A_\bullet(K)\\
}
\]
in which the vertical maps are the Weyl group actions by the class of $g$ in $W_GL=G/L$ or $W_GK=G/K$, must commute.  Hence the fact that $[g]$ acts trivially on $B_\bullet(L)$ implies that the elements $n_L^K(\psi_L(x_L))$ and $n_L^K(\psi_L(t_L))$ are fixed by the Weyl group of $K$ and thus  this assignment is Weyl-equivariant.  The assignments for $x_L$ and $t_L$ are compatible with the differentials on these classes in $B_\bullet(K)$ because $\psi_L$ and $n_L^K$ are maps of differential graded objects.

We thus only have to choose suitable images for $x_K$ and $t_K$. Let $J\leq K$ be the maximal proper subgroup of $K$, so $J\cong C_{p^{r-1}}$. By hypothesis $\psi_J(x_J)\in A_0(J)$ represents a primitive $p^{r-1}$th root of unity.  On homology, the norm shadow $n_J^K\colon A_\bullet(J)\to A_\bullet(K)$ induces a map
\[[n_J^K]\colon \QQ(\zeta_{p^{r-1}})\to \QQ(\zeta_{p^r}).\]
This is a map of fields, and hence is injective.  Thus the homology class of $n_J^K(\psi_J(x_J))$ represents a primitive $p^{r-1}$th root of unity in $\QQ(\zeta_{p^r}).$  It is straightforward to check (either by hand or using a splitting field argument) that any primitive $p^{r-1}$th root of unity in this field has a $p$th root: that is, there is a class $\alpha\in H_0(A_\bullet(K))$ such that $\alpha^p=[n_J^K(\psi_J(x_J))]$ and moreover, $\alpha$ is a primitive $p^r$th root of unity.  Choose a Weyl-fixed representative $\overline{x}_K\in A_0(K)$ of $\alpha$, so that $[\overline{x}_K]=\alpha$ and thus on homology we have
\[[n_J^K(\psi_J(x_J))-\overline{x}_K^p]=0.\]
This equation implies that there exists a Weyl-fixed element $\overline{t}_K\in A_1(K)$ such that $d(\overline{t}_K)=n_J^K(\psi_J(x_J))-\overline{x}_K^p$.  The assignment $\psi_K(x_K)=\overline{x}_K$ and $\psi_K(t_K)=\overline{t}_K$ is thus compatible with the differentials in $B_\bullet(K)$ by inspection.

We have defined the map $\psi_K\colon B_\bullet(K)\to A_\bullet(K)$ in such a way that for any $L\leq K$, we have the equality $n_L^K\circ \psi_L=\psi_K\circ n_L^K$, which gives a commuting square as in the diagram (\ref{normfunctorialitydiagram}).
 On homology, $\psi_K$ sends the primitive $p^r$th root of unity $[x_K]\in H_0(B_\bullet(K))$ to the primitive $p^r$th root of unity $\alpha\in H_0(A_\bullet(K))$ and is thus an isomorphism.\medskip

\noindent\textbf{Case 3.} \emph{$K$ is cyclic of non prime-power order.}  In this case,
\[B_\bullet(K)=\bigotimes_{\substack{\text{$p$ prime}\\p\mid \abs{K}}} \bigotimes_{\substack{L\leq K,\, L\neq e\\\text{$L$ cyclic $p$-group}}} \QQ[x_L]\otimes E(t_L)\]
and any cyclic $p$ subgroup $L$ in $K$ must be proper.  Hence all the generators $x_L$ and $t_L$ are in the image of norm shadows from proper subgroups.  As in Case 2, for any $p$-cyclic subgroup $L$ of $K$, we must define $\psi_K(x_L)=n_L^K(\psi_L(x_L))$ and $\psi_K(t_L)=n_L^K(\psi_L(t_L))$.  It follows from the argument in Case 2 that these assignments produce a well-defined Weyl-equivariant map
\[\psi_K\colon B_\bullet(K)\to A_\bullet(K).\]
We can explicitly see that $\psi_K$ induces an isomorphism on $H_0$ as follows.
For each $p\mid \abs{K}$, let $K_p$ be the unique maximal $p$-cyclic subgroup of $K$. On homology, the element in $B_\bullet(K)$ given by the tensor product of all the elements $x_{K_p}$ represents a primitive $\abs{K}$th root of unity, and thus by the injectivity of field maps (or by analyzing the inductive construction), its image under $\psi_K$ is a primitive $\abs{K}$th root of unity in $H_0(A_\bullet(K))$.

Suppose $K'$ is a proper subgroup of $K$.  We must check the commutativity of the diagram
\[ \xymatrix{ \displaystyle \bigotimes_{\substack{\text{$p$ prime}\\p\mid \abs{K}}} \bigotimes_{\substack{L\leq K,\, L\neq e\\\text{$L$ cyclic $p$-group}}} \QQ[x_L]\otimes E(t_L)\ar[r]^-{\psi_K}& A_\bullet(K)\\
\displaystyle \bigotimes_{\substack{\text{$p$ prime}\\p\mid \abs{K'}}} \bigotimes_{\substack{L'\leq K',\, L'\neq e\\\text{$L'$ cyclic $p$-group}}} \QQ[x_{L'}]\otimes E(t_{L'})\ar[r]^-{\psi_{K'}}\ar[u]^-{n_{K'}^K} &A_\bullet(K')\ar[u]_-{n_{K'}^K}
}\]
Consider $x_{L'}\in B_\bullet(K')$. (The case for $t_{L'}$ is exactly the same.) We then compute:
\begin{align*}
\psi_K(n_{K'}^K(x_{L'}))& =\psi_K(x_{L'})\\
&=n_{L'}^K(\psi_{L'}(x_{L'}))\\
&=n_{K'}^K(n_{L'}^{K'}(\psi_{L'}(x_{L'})))\\
&=n_{K'}^K(\psi_{K'}(n_{L'}^{K'}(x_{L'})))\\
&=n_{K'}^K(\psi_{K'}(x_{L'})).
\end{align*}
The first two equalities follow from the definition of the norm shadows in $B_\bullet$ and the construction of the map $\psi_K$.  The third equality is the usual functoriality of norm shadows.  The fourth equality follows from the inductive hypothesis that $\psi_{K'}$ commute with norm shadows for the proper subgroup $K'<K$.  The final equality again follows from the definition of the norm shadows in $B_\bullet$.\medskip

\noindent \textbf{Case 4.} \emph{$K$ is non-cyclic.}  In this case
\[B_\bullet(K)=M_\bullet \otimes \bigotimes_{\substack{\text{$p$ prime}\\p\mid \abs{K}}}\bigotimes_{\substack{L\leq K,\, L\neq e\\\text{$L$ cyclic $p$-group}}} \QQ[x_L]\otimes E(t_L).\]
This is the coproduct in CDGAs of $M_\bullet$ and the CDGA
\[\bigotimes_{\substack{\text{$p$ prime}\\p\mid \abs{K}}}\bigotimes_{\substack{L\leq K,\, L\neq e\\\text{$L$ cyclic $p$-group}}} \QQ[x_L]\otimes E(t_L)\]
so it suffices to define $\psi_K$ on each of these tensor factors.  On the second factor, we may proceed exactly as in Case 3: for a $p$-cyclic subgroup $L\leq K$, we define $\psi_K(x_L)=n_L^K(\psi_L(x_L))$ and $\psi_K(t_L)=n_L^K(\psi_L(t_L))$.  To define $\psi_K$ on $M_\bullet$, recall that by hypothesis $H_0(A_\bullet(K))=0$. Observe that in the category of unital CDGAs, this implies that the $H_*(A_\bullet(K))=0$ and that the image of the unit map $\QQ\to A_\bullet$ vanishes on homology.  That is, either $A_\bullet(K)=0$ or else the multiplicative identity  $1\in A_0(K)$ must be in the image of the differential on $A_\bullet(K)$.  Thus we may choose a Weyl-fixed element $\overline{a}\in A_1(K)$ such that $d(\overline{a})=1$ and define $\psi_K(a)=\overline{a}$. 

By the same argument as in Case 3, this definition of $\psi_K$ is functorial on the shadows of the norm maps.  Since the homology on both sides is zero, the map $\psi_K$ is an isomorphism on homology.
\end{proof}

\section{Uniqueness results for $\KU_G$ and $\ku_G$}\label{sec:5}
In the previous section we constructed a resolution for $H_*(\Theta(H\rep_G))$ with the property that it admits a map to any $\CDGA$ whose degree zero homology is isomorphic to $H_0(\Theta(H\rep_G))$, and that this map induces an isomorphism on homology in degree zero. In this section we use this construction to prove our main results.

\begin{thm} \label{thm:addingbetas}
Let $G$  be a finite abelian group. For any $A_\bullet \in \A(E_\infty^G(G))$ with $H_*(A_\bullet)$ objectwise isomorphic to $H_*(\theta(\KU_G))$, there exists a zig-zag of weak equivalences in $\A(E_\infty^G(G))$ from $A_\bullet$ to $\Theta(\KU_G)$.
\end{thm}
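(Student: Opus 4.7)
The plan is to build a single diagram $\widetilde{B}_\bullet \in \A(E_\infty^G(G))$ whose objectwise homology matches $H_*(\Theta(\KU_G))$ and which admits a quasi-isomorphism into any $A_\bullet$ satisfying the hypothesis. Since $\Theta(\KU_G)$ itself satisfies the hypothesis by \cref{mainresultsprevpaper}, applying the construction twice produces the zig-zag
\[ A_\bullet \xleftarrow{\simeq} \widetilde{B}_\bullet \xrightarrow{\simeq} \Theta(\KU_G). \]

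To form $\widetilde{B}_\bullet$, I extend the Koszul-type resolution $B_\bullet$ of \cref{constructH0resolution} by tensoring objectwise with a single ``Bott-inversion'' CDGA
\[ C_\bullet := \QQ[\beta, \gamma] \otimes E(u), \quad |\beta|=2,\ |\gamma|=-2,\ |u|=1,\ d(u)=\beta\gamma-1, \]
setting $\widetilde{B}_\bullet(K) := B_\bullet(K) \otimes C_\bullet$ with norm shadows $n_L^K \otimes \mathrm{id}_{C_\bullet}$. Since $\beta\gamma - 1$ is a regular element of the graded polynomial ring $\QQ[\beta, \gamma]$, the Koszul computation yields $H_*(C_\bullet) \cong \QQ[\beta^{\pm 1}]$, so the K\"unneth theorem combined with \cref{lem:goodreshomology} gives $H_*(\widetilde{B}_\bullet(K)) \cong V_K[\beta^{\pm 1}]$ at each $K$, matching $H_*(\Theta_K(\KU_G))$ by \cref{mainresultsprevpaper}.

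The map $\psi\colon \widetilde{B}_\bullet \to A_\bullet$ is then built by an inductive extension of \cref{lem:goodres}. On the $B_\bullet$ tensor factor, $\psi$ is supplied directly by \cref{lem:goodres} applied to the degree-zero part of $H_*(A_\bullet)$. On the new $C_\bullet$ generators, I choose at the trivial subgroup any Weyl-fixed representatives $\overline{\beta}_e \in A_2(e)$ and $\overline{\gamma}_e \in A_{-2}(e)$ of the Bott class and its inverse in $H_*(A_\bullet(e))$; since $[\overline{\beta}_e \overline{\gamma}_e] = 1$, the averaging argument of \cref{lem:averagingtrick} produces a fixed $\overline{u}_e \in A_1(e)$ with $d(\overline{u}_e) = \overline{\beta}_e \overline{\gamma}_e - 1$. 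For each larger subgroup $K$ I am then forced by commutativity to set $\psi_K(\beta) = n_e^K(\overline{\beta}_e)$, $\psi_K(\gamma) = n_e^K(\overline{\gamma}_e)$, and $\psi_K(u) = n_e^K(\overline{u}_e)$; these assignments automatically agree with $n_L^K \circ \psi_L$ for every intermediate $L$ by functoriality of the norm shadows, and they are compatible with the differential since the norm shadows in $A_\bullet$ are CDGA maps.

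The main obstacle is precisely that the homology class $\beta^{-1}$ does \emph{not} lift to a literal multiplicative inverse of a Bott representative inside $A_\bullet(K)$; this is why the naive resolution $B_\bullet \otimes \QQ[\beta^{\pm 1}]$ fails. The role of the Koszul presentation is to relax the relation $\beta\gamma = 1$ to $\beta\gamma - 1 = du$, which \emph{can} be solved in any $A_\bullet$ whose degree-zero homology classifies $\overline{\beta}_e \overline{\gamma}_e$ as $1$. Granted $\psi$, the fact that it is a quasi-isomorphism reduces to checking at each $K$: for cyclic $K$, the degree-zero isomorphism is \cref{lem:goodres} and $[\psi_K(\beta)]$ is a unit in $V_K[\beta^{\pm 1}]$ (nonzero because norm shadows on homology restrict to field extensions on the $V$-factors), so multiplication by this unit and its inverse class generates the remaining degrees; for non-cyclic $K$ both sides are acyclic and there is nothing to verify. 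Applying the entire construction with $A_\bullet$ replaced by $\Theta(\KU_G)$ produces the parallel quasi-isomorphism $\widetilde{B}_\bullet \xrightarrow{\simeq} \Theta(\KU_G)$ that closes the zig-zag.
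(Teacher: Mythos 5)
Your proposal is correct and follows essentially the same route as the paper's proof: you tensor $B_\bullet$ with the Koszul CDGA inverting the Bott class (your $C_\bullet$ is literally the paper's $\QQ[\gamma_K,\bar\gamma_K]\otimes E(y_K)$, written with a constant diagram), you extend the map of \cref{lem:goodres} by choosing Weyl-fixed representatives $\overline{\beta}_e$, $\overline{\gamma}_e$, $\overline{u}_e$ at the trivial subgroup and pushing them forward along $n_e^K$, and you close the zig-zag by running the construction once with $A_\bullet$ and once with $\Theta(\KU_G)$. The only cosmetic difference is in the final quasi-isomorphism check: the paper simply observes that any graded $\QQ$-algebra endomorphism of $\QQ(\zeta_n)[\beta^{\pm1}]$ is injective (graded field) and hence bijective (degreewise finite-dimensionality), whereas you track the image of $\beta$ directly; your parenthetical ``field extensions on the $V$-factors'' is a slightly misplaced justification for nonvanishing of $[\psi_K(\beta)]$ in degree $2$, since the cleaner reason is that $H_*(n_e^K)$ is a unital map out of the graded field $\QQ[\beta^{\pm1}]$ and hence injective, but this does not affect the correctness of the argument.
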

\begin{proof}
For any such $A_\bullet$, observe $H_0(A_\bullet)\cong H_0(\theta(H\rep_G))$ by Theorem \ref{mainresultsprevpaper}. Let $B_\bullet$ be the resolution for $H_*(\Theta(H\rep_G))$ given in Construction \ref{constructH0resolution}. As shown in \cref{lem:goodres}, we have a map $\psi\colon B_\bullet \to A_\bullet$ that induces an isomorphism of diagrams on degree zero homology. Our strategy will be to construct a diagram of CDGAs $D_\bullet$ using $B_\bullet$ and then extend the map $\psi$ to give a weak equivalence $\Psi\colon D_\bullet \to A_\bullet$.

Define a complex $D_\bullet(K)$ for each $K\leq G$ by
	\[
	D_\bullet(K)= B_\bullet(K) \otimes \QQ[\gamma_K, \bar{\gamma}_K] \otimes E(y_K) \quad \text{where}
	\]
	\[
	 |\gamma_K|=2,\quad |\bar{\gamma}_K|=-2, \quad |y_K|=1, \quad d(y_K)=\gamma_K\bar{\gamma}_K-1,
	 \]
and the Weyl group action on $\gamma_K$, $\bar{\gamma}_K$, and $y_K$ is trivial. From the K\"unneth theorem, identifying $[\gamma_K]$ with $\beta_K$, we observe
	\[
	H_*(D_\bullet(K))\cong H_*(B_\bullet(K))\otimes \QQ[\beta_K, \beta_K^{-1}]\cong H_*(\Theta(\KU_G)(K)),
	\]
so the complex has the correct objectwise homology. To define norm shadows, suppose $L\leq K\leq G$ and extend the norm shadows in $B_\bullet$ by making the following assignments on the additional generators:
	\[
	n_L^K(\gamma_L)=\gamma_K, \quad n_L^K(\bar{\gamma}_L)=\bar{\gamma}_K, \quad \text{and} \quad n_L^K(y_L)=y_K.
	\]
Observe that on homology, the norm shadows in $D_\bullet$ satisfy $[\gamma_L] \mapsto [\gamma_K]$ for $L\leq K$.

We next extend $\psi$ to a map $\Psi\colon D_\bullet \to A_\bullet$. To do this, first consider the complex at the trivial subgroup, and choose representatives $\alpha_e, \bar{\alpha}_e\in A_\bullet(e)$ such that $[\alpha_e]=\beta_e$ and $[\bar{\alpha}_e]=\beta_e^{-1}$ in homology. Since $[\alpha_e][\bar{\alpha}_e] = \beta_e \beta_e^{-1}=1$, there must exist a class $z_e\in A_1(e)$ such that $d(z_e)=\alpha_e\bar{\alpha}_e-1$. Note we can choose these elements to be Weyl-fixed by \cref{lem:averagingtrick}. Now let $K$ be a subgroup of $G$. Consider the norm shadow
	\[
	n_e^K\colon A_\bullet(e) \to A_\bullet(K).
	\]
Let $\alpha_K=n_e^K(\alpha_e)$, $\bar{\alpha}_K=n_e^K(\bar{\alpha}_e)$, and $z_K=n_e^K(z_e)$. The norm shadow is a map of $\cdga$'s, so we have the relation
	\[
	d(z_K)= d(n_e^K(z_e))=n_e^K(d(z_e))=n_e^K(\alpha_e\bar{\alpha}_e-1)=\alpha_K\bar{\alpha}_K-1.
	\]
Thus $z_K$ is a witness to the fact that $[\alpha_K][\bar{\alpha}_K]=1$ in homology.

We may  thus extend the map $\psi$ to a map $\Psi$ by defining $\gamma_K\mapsto \alpha_K$, $\bar{\gamma}_K\mapsto \bar{\alpha}_K$, and $y_K\mapsto z_K$. By construction, this is a map of $\CDGA$'s at each level and commutes with the shadows of the norm maps.

It remains to check that the map $\Psi$ induces objectwise isomorphisms on homology. Recall from \cref{mainresultsprevpaper} that the homology at each subgroup is either zero or isomorphic to $\QQ(\zeta_n)[\beta^{\pm 1}]$ for some $n$. The only graded $\QQ$-algebra endomorphisms of $\QQ(\zeta_n)[\beta^{\pm 1}]$ are injective because it is a graded field, and furthermore since $\QQ(\zeta_n)[\beta^{\pm 1}]$ is a finite dimensional $\QQ$-vector space in each grading, injectivity implies surjectivity. Thus the map $\Psi$ does indeed induce objectwise isomorphisms on homology.

We have constructed a weak equivalence $\Psi\colon D_\bullet \to A_\bullet$ for any $A_\bullet$ such that $H_*(A_\bullet)$ is objectwise isomorphic to $H_*(\Theta(\KU_G))$. In particular, there exists a zig-zag of weak equivalences in $\A(E_\infty^G(G))$
\[
\begin{tikzcd}
\Theta(\KU_G) \& D_\bullet \arrow{r}{\simeq} \arrow{l}[swap]{\simeq} \& A_\bullet,
\end{tikzcd}
\]
which completes the proof.
\end{proof}

\begin{thm}\label{genuineuniquenessKU} For any finite abelian group $G$, the spectrum $KU_G$ admits a unique structure as a rational genuine-commutative ring $G$-spectrum. That is, if $X$ is a genuine-commutative ring spectrum whose graded Green functor of homotopy groups is isomorphic to that of $KU_G$, then there is a weak equivalence of rational genuine-commutative ring $G$-spectra between $X$ and $KU_G$.
\end{thm}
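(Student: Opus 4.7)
The plan is to transfer the question to the algebraic model $\A(E_\infty^G(G))$ via the equivalence of $\infty$-categories from \cref{thm:genuineAlgMod}, verify that $\Theta(X)$ satisfies the hypothesis of \cref{thm:addingbetas}, and then lift the resulting zig-zag back to genuine-commutative ring spectra.

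First I would form $\Theta(X)\in \A(E_\infty^G(G))$ and check that $H_*(\Theta(X))$ is objectwise isomorphic to $H_*(\Theta(\KU_G))$. For this I would use the identification $H_*(\Theta_K(Y))\cong \pi_*(\Phi^K(Y))$ recalled in \cref{sect:strategy}, applied to both $Y=X$ and $Y=\KU_G$. After rationalization, the geometric fixed point homotopy $\pi_*(\Phi^K(Y))$ is determined by the underlying homotopy Mackey functor of $Y$, for instance via the Greenlees--May splitting of rational $G$-Mackey functors into Weyl-group-twisted components (cf.\ \cref{thm:GAlgMod}). Since the hypothesis $\underline{\pi}_*(X)\cong \underline{\pi}_*(\KU_G)$ as Green functors in particular gives isomorphic underlying Mackey functors in every degree, the required objectwise isomorphism of homology follows.

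Once this hypothesis is verified, \cref{thm:addingbetas} supplies a zig-zag of weak equivalences $\Theta(X)\simeq \Theta(\KU_G)$ in $\A(E_\infty^G(G))$. Since $\Theta\colon \EGalg(G\Sp_\bQ)\to \A(E_\infty^G(G))$ is an equivalence of $\infty$-categories by \cref{thm:genuineAlgMod}, this zig-zag lifts to a weak equivalence $X\simeq \KU_G$ in the $\infty$-category of rational genuine-commutative ring $G$-spectra, completing the argument.

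The technical heart of the uniqueness statement has already been absorbed into \cref{thm:addingbetas} (which in turn rests on the Koszul-type resolution $B_\bullet$ and its extension $D_\bullet$). Consequently the only remaining step is the intermediate bookkeeping one: confirming that the abstract isomorphism of Green functors of homotopy groups produces an objectwise isomorphism of homology in the algebraic model. This turns out not to be a genuine obstacle because \cref{thm:addingbetas} only requires an objectwise comparison, rather than a comparison of the entire $\Orb_G$-diagram of homologies.
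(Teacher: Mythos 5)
Your proof is correct and follows the same route as the paper's: transfer to the algebraic model via $\Theta$, apply \cref{thm:addingbetas} to obtain a zig-zag of weak equivalences in $\A(E_\infty^G(G))$, and lift back through the $\infty$-categorical equivalence of \cref{thm:genuineAlgMod}. The one place where you supply more detail is the verification that an isomorphism of graded Green functors $\underline{\pi}_*(X)\cong\underline{\pi}_*(\KU_G)$ forces $H_*(\Theta(X))$ to be objectwise isomorphic to $H_*(\Theta(\KU_G))$; the paper invokes \cref{thm:addingbetas} without spelling this out, and your argument via the identification $H_*(\Theta_K(Y))\cong\pi_*(\Phi^K Y)$ together with the fact that rationally the geometric fixed point homotopy of $Y$ is recoverable from $\underline{\pi}_*(Y)$ through the idempotent (Greenlees--May) splitting is the right way to close that gap. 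One small point worth making explicit, since \cref{thm:addingbetas} ultimately feeds into \cref{lem:goodres}: the objectwise comparison needs to respect the graded ring structure and have trivial Weyl action; the former comes from the Green functor hypothesis (geometric fixed points preserve ring structure), and the latter is automatic for abelian $G$ as recorded in \cref{mainresultsprevpaper}.
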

\begin{proof} Consider the corresponding algebraic object $\Theta(X)\in \A(E_\infty^G(G))$. From Theorem \ref{thm:addingbetas} we have that $\Theta(X)$ is weakly equivalent to $\Theta(\KU_G)$ in $\A(E_\infty^G(G))$. The result then follows from \cref{thm:genuineAlgMod}.
\end{proof}

Such a uniqueness result does not hold for the connective cover $\Theta(\ku_G)$, although it is true that the diagram $\Theta(\ku_G)$ is formal in $\A(E^G_\infty(G))$. The main difference is the polynomial class $\beta_e$ is not invertible and so the norm shadows are not determined by the objectwise homology.  Compare the results of \cref{formalitylittleku} and \cref{non_uniqueness_ku}.

\begin{thm}\label{formalitylittleku} For $G$ finite abelian, the diagram $\Theta(\ku_G)$ is formal in the category $\A(E^G_\infty(G))$.  In fact, for any diagram $A_\bullet$ such that $H_*(A_\bullet)$ is isomorphic as diagrams to $H_*(\Theta(\ku_G))$, there is a zig-zag of weak equivalences relating $A_\bullet$ and $\Theta(\ku_G)$.
\end{thm}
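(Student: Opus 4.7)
The plan is to adapt the strategy used in the proof of Theorem~\ref{thm:addingbetas} for $\KU_G$, with a simpler intermediate diagram $D_\bullet$ reflecting the fact that the Bott class in $\ku_G$ need not be inverted. Let $A_\bullet \in \A(E_\infty^G(G))$ satisfy $H_*(A_\bullet) \cong H_*(\Theta(\ku_G))$ as diagrams. Since $\ku_G$ and $H\rep_G$ agree in degree zero, the hypothesis gives $H_0(A_\bullet) \cong H_0(\Theta(H\rep_G))$ objectwise with trivial Weyl actions, so \cref{constructH0resolution} and \cref{lem:goodres} produce a resolution $B_\bullet$ together with a map $\psi \colon B_\bullet \to A_\bullet$ that is an isomorphism on $H_0$. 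Define a diagram $D_\bullet \in \A(E_\infty^G(G))$ by
\[
D_\bullet(K) = B_\bullet(K) \otimes \QQ[\gamma_K], \qquad |\gamma_K|=2,
\]
with trivial Weyl action on $\gamma_K$, and with norm shadows extending those of $B_\bullet$ by $n_L^K(\gamma_L) = \gamma_K$. By the K\"unneth theorem, $H_*(D_\bullet(K)) \cong V_K[\beta_K]$ with $\beta_K = [\gamma_K]$, matching $H_*(\Theta(\ku_G)(K))$ as diagrams.

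Next, extend $\psi$ to a map $\Psi\colon D_\bullet \to A_\bullet$. Using the Averaging \cref{lem:averagingtrick}, choose a Weyl-fixed representative $\alpha_e \in A_2(e)$ of a generator of the one-dimensional $\QQ$-vector space $H_2(A_\bullet(e))$. For every $K \leq G$, set $\alpha_K := n_e^K(\alpha_e)$; these elements are Weyl-fixed because $\alpha_e$ is $G$-fixed and $n_e^K$ intertwines the $G$-action on $A_\bullet(e)$ with the $G/K$-action on $A_\bullet(K)$. Define $\Psi_K$ to extend $\psi_K$ by $\gamma_K \mapsto \alpha_K$; this is a well-defined map of CDGAs and commutes with norm shadows by construction.

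The main task is to check that $\Psi$ is a weak equivalence. Here is where the proof differs essentially from the $\KU_G$ case: for $\KU_G$ one could argue purely from objectwise homology, using that $\QQ(\zeta_n)[\beta^{\pm 1}]$ is a graded field, but $\QQ(\zeta_n)[\beta]$ is not. Instead, one uses the stronger hypothesis that $H_*(A_\bullet) \cong H_*(\Theta(\ku_G))$ as \emph{diagrams}: since the norm shadow on homology of $\Theta(\ku_G)$ sends $\beta_e$ to $\beta_K$, the corresponding norm shadow on $H_*(A_\bullet)$ sends $[\alpha_e]$ to a nonzero element of $H_2(A_\bullet(K)) \cong V_K$, so $[\alpha_K] \neq 0$. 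Because $\Psi_K$ is a multiplicative map that is an isomorphism on $H_0$, it is $V_K$-linear, and it sends $\beta_K^D$ to a nonzero element of the one-dimensional $V_K$-vector space $H_2(A_\bullet(K))$; hence it is an isomorphism of graded $V_K$-algebras $V_K[\beta_K^D] \to V_K[\beta_K^A]$. Applying the construction once with $A_\bullet = \Theta(\ku_G)$ and once with the given $A_\bullet$ yields the zig-zag
\[
\Theta(\ku_G) \xleftarrow{\;\simeq\;} D_\bullet \xrightarrow{\;\simeq\;} A_\bullet,
\]
and specializing to $A_\bullet = H_*(\Theta(\ku_G))$ (viewed as a diagram of CDGAs with zero differentials) gives the formality statement. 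The principal obstacle is thus the homological surjectivity check in this last paragraph, for which the diagrammatic (as opposed to merely objectwise) hypothesis on $H_*(A_\bullet)$ is essential.
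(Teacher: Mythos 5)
Your proposal follows essentially the same route as the paper: build $D_\bullet(K)=B_\bullet(K)\otimes\QQ[\gamma_K]$ (the paper's $D'_\bullet$) with norm shadows $\gamma_L\mapsto\gamma_K$, then extend $\psi\colon B_\bullet\to A_\bullet$ to $\Psi\colon D_\bullet\to A_\bullet$ by choosing a Weyl-fixed degree-$2$ class at the trivial subgroup and pushing it forward along norm shadows, and finally conclude by zig-zagging through $D_\bullet$. You spell out the homological surjectivity check that the paper only refers to as ``a similar argument to that in the proof of \cref{thm:addingbetas},'' and you are right that this is where the \emph{diagrammatic} (rather than merely objectwise) hypothesis on $H_*(A_\bullet)$ does real work.

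There is, however, a genuine gap. Your argument twice invokes the claim that the norm shadows in $H_*(\Theta(\ku_G))$ carry $\beta_e$ to (a nonzero multiple of) $\beta_K$ --- once to conclude that $H_*(D_\bullet)\cong H_*(\Theta(\ku_G))$ as diagrams (not just objectwise), and once to conclude that $[\alpha_K]=n_e^K([\alpha_e])$ is nonzero so that $\Psi_K$ is surjective on homology. This is precisely the input that \cref{non_uniqueness_ku} shows is \emph{not} automatic from the objectwise description of $H_*(\theta(\ku_G))$: there are diagrams with the same objectwise homology whose norm shadows kill the Bott class. You never justify this fact. The paper establishes it in the first paragraph of its proof by observing that $\ku_G$ is the connective cover of $\KU_G$, so the norm shadows on $\Theta(\ku_G)$ are truncations of those on $\Theta(\KU_G)$, and then appealing to \cref{thm:addingbetas}, which shows the norm shadows on $H_*(\Theta(\KU_G))$ send $\beta$ to $\beta$. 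Without this step, the proof is circular at the point where you assert $H_*(A_\bullet)\cong H_*(\Theta(\ku_G))$ forces $[\alpha_K]\neq 0$. (You should also note that when $K$ is non-cyclic the homology vanishes and the weak-equivalence check is vacuous; this does not create a problem but the ``$[\alpha_K]\neq 0$'' phrasing should be restricted to cyclic $K$.)
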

\begin{proof}
Since $\ku_G$ is the connective cover of $\KU_G$, the norm shadows on $\Theta(\ku_G)$ are the truncations of the norm shadows on $\Theta(\ku_G)$.  Writing $H_*(\Theta_K(\ku_G))=V_K[\beta]$, as in \cref{mainresultsprevpaper},  we see that each norm shadow takes $\beta$ to $\beta$ because \cref{thm:addingbetas} implies that this is true in $H_*(\Theta(\KU_G))$.

As in the proof of \cref{thm:addingbetas}, we extend the resolution $B_\bullet$ of $\Theta(H\rep_G)$ to one of $\Theta(\ku_G)$.  Concretely, for $K\leq G$, we define
\[ D'_\bullet(K)=B_\bullet(K)\otimes \QQ[\gamma_K]\]
where $\abs{\gamma_K}=2$, $\gamma_K$ is Weyl-fixed, and $d(\gamma_K)=0$.  As in the definition of $D_\bullet$, the norm shadows on $D'_\bullet$ are given by the norm shadows on $B_\bullet$ and the assignment $\gamma_L\mapsto \gamma_K$ for $L<K$. Thus by definition, $H_*(D'_\bullet(K))=V_K[\beta]$, with norm shadows taking $\beta$ to $\beta$.

By a similar argument to that in the proof of \cref{thm:addingbetas}, we can construct a map of diagrams $\psi\colon D'_\bullet\to A_\bullet$ inducing a weak equivalence on homology whenever the homology of $A_\bullet$ is isomorphic to that of $D'_\bullet$ via an isomorphism of diagrams.
\end{proof}

This shows that the diagram $\Theta(\ku_G)$ is determined by the diagram $H_*(\Theta(\ku_G))$.  However, the objectwise description of $H_*(\theta(\ku_G))$ given in \cref{mainresultsprevpaper} is not sufficient to determine $\Theta(\ku_G)$.

\begin{lemma}\label{non_uniqueness_ku}
Suppose $G$ is a nontrivial finite abelian group. Then there exists $A_\bullet \in  \A(E_\infty^G(G))$ such that $H_*(A_\bullet)$ is objectwise isomorphic to $H_*(\theta(\ku_G))$ but not isomorphic to $H_*(\Theta(\ku_G))$ as a diagram in $ \A(E_\infty^G(G))$.
\end{lemma}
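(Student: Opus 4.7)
The plan is to build $A_\bullet$ from the formal model of $H_*(\Theta(\ku_G))$ introduced in the proof of \cref{formalitylittleku}, but with norm shadows that annihilate the Bott class instead of preserving it. Set $A_\bullet(K) = B_\bullet(K) \otimes \QQ[\gamma_K]$ at each subgroup $K$, with zero differential on $\gamma_K$ and trivial Weyl action, exactly as in the construction of $D'_\bullet$. Retain the norm shadows on the $B_\bullet$-factors as in \cref{constructH0resolution}, but for each proper containment $L \lneq K$ replace the rule $\gamma_L \mapsto \gamma_K$ by $\gamma_L \mapsto 0$. This still defines a unital CDGA map, because zero is a valid image for a polynomial generator, and functoriality on the subgroup lattice is automatic since $\gamma_L \mapsto 0 \mapsto 0$ agrees with $\gamma_L \mapsto 0$. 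All Weyl actions on the $\gamma_K$ remain trivial, so by \cref{rem:abelian_construction} this data determines an object $A_\bullet \in \A(E_\infty^G(G))$.

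By construction $H_*(A_\bullet(K)) = V_K[\beta_K]$ for each $K$, matching $H_*(\Theta(\ku_G))$ objectwise. To show these diagrams are not isomorphic, I proceed by contradiction and assume an isomorphism $f\colon H_*(\Theta(\ku_G)) \to H_*(A_\bullet)$ exists. Since $G$ is nontrivial, Cauchy's theorem yields a subgroup $C_p \leq G$ of prime order $p$. Each $f_K$ is a graded $\QQ$-algebra automorphism of $V_K[\beta_K]$, so in particular $f_{C_p}(\beta_{C_p}) = c\,\beta_{C_p}$ for some $c \in \QQ(\zeta_p)^\times$, and $f_e(\beta_e) = d\,\beta_e$ for some $d \in \QQ^\times$. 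Compatibility of $f$ with the norm shadow $n_e^{C_p}$ gives
\[
f_{C_p}\bigl(n_e^{C_p}(\beta_e)\bigr) = n_e^{C_p}\bigl(f_e(\beta_e)\bigr).
\]
By \cref{formalitylittleku} the norm shadow in $\Theta(\ku_G)$ sends $\beta_e \mapsto \beta_{C_p}$, so the left-hand side equals $c\,\beta_{C_p} \neq 0$. In $A_\bullet$ the norm shadow sends $\beta_e \mapsto 0$, so the right-hand side equals $d \cdot 0 = 0$, a contradiction.

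The main subtlety is verifying that the modified collection of norm shadows genuinely assembles into an $\Orb_G$-diagram, i.e., that prescribing $\gamma_L \mapsto 0$ on every proper containment is consistent with the multiplicative structure and with iterated norm shadows. This is where the argument could in principle fail, but the polynomial nature of the generators $\gamma_K$ and the obvious transitivity of the assignment $0 \mapsto 0$ make it straightforward. The remaining obstruction calculation is short and highlights the asymmetry with \cref{thm:addingbetas}: in the periodic case the Bott class is a unit and must be sent to a nonzero element by any algebra-map realization, whereas in the connective case $\beta$ is only a polynomial generator and can legitimately be killed, so the objectwise homology no longer rigidifies the diagram structure.
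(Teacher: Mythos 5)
Your proposal matches the paper's own construction: both build $A_\bullet(K) = B_\bullet(K)\otimes \QQ[\beta_K]$ (you write $\gamma_K$) with the same differentials and then declare the norm shadows to kill the polynomial generator, and both then argue the resulting diagram of homologies cannot be isomorphic to $H_*(\Theta(\ku_G))$. The only difference is cosmetic: where you run an explicit contradiction against a hypothetical diagram isomorphism $f$ using the pair $e \leq C_p$ (invoking Cauchy's theorem to produce the cyclic $p$-subgroup), the paper instead observes that the norm shadows on $H_*(\Theta(\ku_G))$ are restrictions of the field maps in $H_*(\Theta(\KU_G))$, hence injective, while those on $H_*(A_\bullet)$ are not. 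These are two phrasings of the same obstruction. If anything, your version is a touch more careful, since it pins down a specific pair of subgroups where the injective-versus-zero discrepancy is forced (the paper's blanket claim of injectivity "at each level" is slightly loose when the target $V_K$ vanishes for non-cyclic $K$). Your checks that the modified norm shadows assemble into a genuine $\Orb_G$-diagram are correct and are the same as the paper's implicit appeal to functoriality through $B_\bullet$.
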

\begin{proof}
We prove this by constructing such an $A_\bullet$. For subgroups $K\leq G$ define $A_\bullet(K)$ to be the tensor product $B_\bullet(K)\otimes \QQ[\beta_K]$ where $\abs{\beta_K}=2$, $d(\beta_K)=0$, and the Weyl group action on $\beta_K$ is trivial, and where $B_\bullet$ is the diagram defined in Construction \ref{constructH0resolution}. Extend the norm shadows from $B_\bullet$ to $A_\bullet$ via $n_L^K(\beta_L)=0$ for all subgroups $L< K\leq G$. By the K\"unneth Theorem, Lemma \ref{lem:goodreshomology}, and Theorem \ref{mainresultsprevpaper}, we see that $H_*(A_\bullet)$ is objectwise isomorphic to $H_*(\theta(\ku_G))$, with the induced norm shadows on $H_*(A_\bullet)$ all satisfying $\beta_K\mapsto 0$.

On the other hand, the shadows of the norm maps in $H_*(\Theta(\ku_G))$ are restrictions of the shadows of the norms in the diagram of fields $H_*(\Theta(\KU_G))$ because $\ku_G$ is the connective cover of $\KU_G$. Thus they must be injective maps at each level. We conclude $H_*(A_\bullet)$ is not isomorphic to $H_*(\Theta(\ku_G))$ in $\A(E_\infty^G(G))$.
\end{proof}

\begin{rem}
We can in fact construct multiple non-isomorphic objects in $\A(E_\infty^G(G))$ whose objectwise homology is the same as $H_*(\theta(\ku_G))$. The number of non-equivalent genuine-commutative ring structures on $H_*(\theta(\ku_G))$ depends on the cyclic subgroup lattice of $G$. For example, if $G=C_{p^2}$ we can define the shadows of the norms in exactly four ways (up to isomorphism).  We depict these choices as diagrams of the form indicated below on the far left.
\begin{align*}
\xymatrix{ A_\bullet(C_{p^2}) & \QQ(\zeta_{p^2})[\beta] & \QQ(\zeta_{p^2})[\beta] & \QQ(\zeta_{p^2})[\beta] & \QQ(\zeta_{p^2})[\beta]  \\
A_\bullet(C_p)\ar[u]^{n_{C_p}^{C_{p^2}}} &\QQ(\zeta_p)[\beta]\ar[u]^{\beta\mapsto 0} & \QQ(\zeta_p)[\beta]\ar[u]^{\beta \mapsto\beta} & \QQ(\zeta_p)[\beta]\ar[u]^{\beta\mapsto 0} & \QQ(\zeta_p)[\beta]\ar[u]^{\beta\mapsto \beta}\\
A_\bullet(e)\ar[u]^{n_e^{C_p}} &
\QQ[\beta]\ar[u]^{\beta\mapsto 0} & \QQ[\beta]\ar[u]^{\beta\mapsto 0} & \QQ[\beta]\ar[u]^{\beta\mapsto\beta} & \QQ[\beta]\ar[u]^{\beta\mapsto\beta}
}
\end{align*}
 In each diagram, in degree zero the norm shadows are given by the standard inclusions of fields, and in higher degrees determined by the indicated image of $\beta$.
In general there are $2^{n}$ non-isomorphic extensions for $C_{p^n}$ given by the two choices, zero versus nonzero, of the image of the Bott element $\beta$ at each subgroup.

The number of genuine-commutative ring structures on $H_*(\theta(\ku_G))$ becomes even more interesting when considering other cyclic groups. For example when $G=C_{pq}$ for $p$ and $q$ distinct primes, there are four norm shadows and thus at most $2^4 =16$ choices.  Some of these do not give rise to a commutative diagram, such as the combination
\[
n_{e}^{C_{p}}(\beta_{e})=\beta_{C_p},\quad \quad  n_{C_p}^{C_{pq}}(\beta_{C_p})=\beta_{C_{pq}}, \quad \quad n_{e}^{C_{q}}(\beta_{e})=0, \quad \quad n_{C_q}^{C_{pq}}(\beta_{C_q})=0.
\]
After removing the options where $n_{C_p}^{C_{pq}}n_{e}^{C_{p}}(\beta_e)\neq n_{C_q}^{C_{pq}}n_{e}^{C_{q}}(\beta_e)$, one sees there are exactly $10$ genuine-commutative ring structures up to isomorphism in $\A(E_\infty^G(G))$  when $G=C_{pq}$.
\end{rem}

The result of \cref{non_uniqueness_ku} implies a non-uniqueness result for $\ku_G$ that stands in contrast to the result of \cref{genuineuniquenessKU} for periodic K-theory.

\begin{thm}\label{thm:non_uniqueness_ku}
Let $G$ be a non-trivial finite abelian group. There exists a rational genuine-commutative ring G-spectrum $X$ whose underlying Green functor of homotopy groups is isomorphic to that of $\ku_G$ but which is not weakly equivalent to $\ku_G$ as genuine-commutative rational $G$-spectrum. That is, $X$ is weakly equivalent to $\ku_G$ in the category of rational naive-commutative ring $G$-spectra but not in the category of rational genuine-commutative ring $G$-spectra.
\end{thm}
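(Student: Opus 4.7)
The plan is to promote \cref{non_uniqueness_ku} to a statement about spectra by pulling back through the genuine algebraic model equivalence. First I would apply \cref{non_uniqueness_ku} to obtain the diagram $A_\bullet \in \A(E_\infty^G(G))$ whose objectwise homology agrees with $H_*(\theta(\ku_G))$ but whose $\Orb_G$-diagrammatic homology is not isomorphic to $H_*(\Theta(\ku_G))$. By \cref{thm:genuineAlgMod}, there exists a rational genuine-commutative ring $G$-spectrum $X$ with $\Theta(X) \simeq A_\bullet$, and this $X$ is our candidate witness.

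Next I would show $X$ is weakly equivalent to $\ku_G$ in the category of rational naive-commutative ring $G$-spectra. By \cref{lem:diagramOfAlgMod}, the image $\theta(X) \in \A(E_\infty^1(G))$ is obtained by restricting $\Theta(X) \simeq A_\bullet$ to the subcategory $\Orb_G^\times$ of isomorphisms. The construction of $A_\bullet$ in \cref{non_uniqueness_ku} (built from $B_\bullet$ together with $\QQ[\beta_K]$) gives trivial Weyl group action at every subgroup, and by \cref{mainresultsprevpaper} the same holds for $H_*(\theta(\ku_G))$. Together with the objectwise homology agreement from \cref{non_uniqueness_ku}, this upgrades to an isomorphism $H_*(\theta(X)) \cong H_*(\theta(\ku_G))$ of $\Orb_G^\times$-diagrams. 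Invoking the naive uniqueness/formality argument of \cite{BHIKM} (which is exactly what produces \cref{mainresultsprevpaper} for $\ku_G$), we then get a zig-zag of weak equivalences $\theta(X) \simeq \theta(\ku_G)$ in $\A(E_\infty^1(G))$, and hence a weak equivalence of naive-commutative ring $G$-spectra $X \simeq \ku_G$. In particular, this equivalence shows their underlying homotopy Green functors are isomorphic.

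Finally I would rule out equivalence at the genuine level by a direct contradiction. If $X$ were weakly equivalent to $\ku_G$ as genuine-commutative ring $G$-spectra, then \cref{thm:genuineAlgMod} would give $\Theta(X) \simeq \Theta(\ku_G)$ in $\A(E_\infty^G(G))$, and hence $A_\bullet \simeq \Theta(\ku_G)$ as $\Orb_G$-diagrams. Any such weak equivalence induces an isomorphism of diagrams on homology, yielding $H_*(A_\bullet) \cong H_*(\Theta(\ku_G))$ in $\A(E_\infty^G(G))$, which directly contradicts \cref{non_uniqueness_ku}.

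The main obstacle is the naive-equivalence step: one must check that the objectwise homology agreement between $A_\bullet$ and $\theta(\ku_G)$ really does assemble into an isomorphism at the level of $\Orb_G^\times$-diagrams (not merely at individual subgroups), and then that this homology isomorphism lifts to a weak equivalence of CDGA-valued diagrams. Both are manageable because everything reduces to the abelian setting where Weyl actions on homology are trivial and the relevant CDGAs $V_K[\beta]$ are formal; the second ingredient is essentially the naive uniqueness statement for $\ku_G$ that underlies \cref{mainresultsprevpaper}. Once these are in hand, the genuine non-equivalence is a formal consequence of \cref{thm:genuineAlgMod} applied to the distinguishing diagrammatic data of \cref{non_uniqueness_ku}.
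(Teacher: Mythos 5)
Your proposal follows the same route as the paper: take the $A_\bullet$ from \cref{non_uniqueness_ku}, realize it as a genuine-commutative ring $G$-spectrum $X$ via \cref{thm:genuineAlgMod}, invoke the naive uniqueness of $\ku_G$ from \cite{BHIKM} to get the naive equivalence, and observe that the mismatch in diagrammatic homology rules out a genuine equivalence. The only cosmetic difference is that you unpack the naive-level comparison (trivial Weyl actions, restriction along $\Orb_G^\times \hookrightarrow \Orb_G$) in more detail, where the paper simply cites \cite[Corollary~5.10]{BHIKM}.
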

\begin{proof}
Consider the diagram $A_\bullet$ from Lemma \ref{non_uniqueness_ku}. By \cref{thm:genuineAlgMod} there is a corresponding genuine-commutative ring $G$-spectrum $X$ such that $\Theta(X)$ and $A_\bullet$ are weakly equivalent. By the uniqueness of the naive-commutative structure on $\ku_G$ from \cite[Corollary 5.10]{BHIKM}, there exists a zig-zag of weak equivalences between $X$ and $\ku_G$ in the category of naive-commutative rational ring spectra. But since by construction $H_*(\Theta(X))$ is not isomorphic to $H_*(\Theta(\ku_G))$, the diagram $\Theta(X)$ cannot be weakly equivalent to $\Theta(\ku_G)$, and thus $X$ cannot be weakly equivalent to $\ku_G$ as genuine-commutative rational ring $G$-spectra.
\end{proof}

We end this section by clarifying the conclusions that can be drawn from our uniqueness results. As noted in Section \ref{sect:reviewalgmodels}, there is a forgetful functor from rational genuine-commutative ring $G$-spectra to rational naive-commutative ring $G$-spectra \[
U\colon E_\infty^G\text{-alg}(G\Sp_\bQ) \to E_\infty^1\text{-alg}(G\Sp_\bQ).
\]
The main result in Theorem \ref{genuineuniquenessKU} implies that if $X\in E_\infty^G\text{-alg}(G\Sp_\bQ)$ satisfies $U(X)\simeq U(\KU_G)$, then $X\simeq \KU_G$ in $E_\infty^G\text{-alg}(G\Sp_\bQ)$. Note, however, that these results do not imply that any naive-commutative ring $G$-spectrum $Y$ that is weakly equivalent to $\KU_G$ must admit a genuine-commutative ring structure. In other words, not every object that is weakly equivalent to $\KU_G$ in $E_\infty^1\text{-alg}(G\Sp_\bQ)$ arises from an object in $E_\infty^G\text{-alg}(G\Sp_\bQ)$ by forgetting the norm maps. Below we construct a naive-commutative ring $G$-spectrum that is weakly equivalent to $\KU_G$ but does not admit a genuine-commutative ring structure.

\begin{ex}\label{e:no_extension_to_genuine}
Let $G=C_{p^2}$ where $p\neq 2$.  We construct an $A_\bullet\in\A(E^1_\infty(G))$ that is not in the image of the forgetful functor from $\A(E^G_\infty(G))$ to $\A(E^1_\infty(G))$.  Via \cref{lem:diagramOfAlgMod}, there thus exists a naive-commutative ring spectrum $X$ corresponding to $A_\bullet$ that does not admit a genuine-commutative ring structure. The object $A_\bullet$ is defined as follows:
\begin{align*}
	A_\bullet(C_{p^2})& =\QQ[x_{p^2}]\otimes E(y_{p^2}) &
	A_\bullet({C_{p}})& =\QQ (\zeta_p ) &
	A_\bullet({e})&=\QQ
	\end{align*}
where in the first CDGA $\abs{x_{p^2}}=0$, $\abs{y_{p^2}}=1$,  and $d(y_{p^2})=\Phi_{p^2}(x_{p^2})$ and the latter two CDGAs are concentrated in degree zero with zero differential. Note that the homology of  $A_\bullet({C_{p^2}})$ is $\QQ(\zeta_{p^2})$ concentrated in degree zero.  Hence the homology of $A_\bullet$ is isomorphic to the homology of $\theta(H\rep_G)$.  By \cite[Lemma 5.5]{BHIKM}, the complex $\theta(H\rep_G)$ is formal; hence there is a zig-zag of weak equivalences of rational naive-commutative ring  $G$-spectra between $X$ and  $H\rep_G$.  We show that $A_\bullet$ is not the underlying naive-commutative object of a well-defined object in $\A(E_\infty^G(G))$.

Suppose that a norm shadow
\[n\colon A_\bullet({C_{p}}) \to A_\bullet({C_{p^2}})\] exists.
By a degree argument we must have that $n(\zeta_p ) = f(x_{p^2}),$ where  $f(x_{p^2})$ is some polynomial in $\QQ[x_{p^2}]$, and therefore
\[\left(n(\zeta_p )\right)^p=\left(f(x_{p^2})\right)^p.\] Furthermore, since the shadows of the norm maps are maps of commutative rings, we also have the identity
\[\left(n(\zeta_p )\right)^p=n((\zeta_p) ^p)=n(1)=1.\]
As a consequence, we get $(f(x_{p^2}))^p=1$ which only holds when $f(x_{p^2})=1$. However, in degree zero the map $n$ is a map from a field to a ring and thus must be injective.  Since $1\in A_\bullet(C_{p^2})$ is also the image of the unit $1\in A_\bullet(C_p)$, this leads to a contradiction.

Since $\KU_G$ and $H\rep_G$  agree at the zeroth degree, this example can be extended to an example of a naive-commutative ring spectrum $X$ that is isomorphic to $\KU_G$ in the homotopy category of naive-commutative ring $G$-spectra but does not admit a genuine-commutative ring structure.
\end{ex}

\begin{rem} At $p=2$, one has $\QQ(\zeta_2)=\QQ$, and so the complex in $\A(E^1_\infty(C_4))$ constructed in \cref{e:no_extension_to_genuine} does underlie a complex in $\A(E^{C_4}_\infty(C_4))$ in which all norm shadows are the unit map.  In this case, one can instead construct slightly more complicated examples of complexes in $\A(E^1_\infty(C_4))$ that do not arise from an object in $\A(E^{C_4}_\infty(C_4))$.
\end{rem}

\bibliography{wit3references}

\begin{thebibliography}{10}

\bibitem{AngeltveitBohmann}
Vigleik Angeltveit and Anna~Marie Bohmann.
\newblock Graded {T}ambara functors.
\newblock {\em J. Pure Appl. Algebra}, 222(12):4126--4150, 2018.

\bibitem{BBRNinftyassociahedra}
Scott Balchin, David Barnes, and Constanze Roitzheim.
\newblock {$N_\infty$}-operads and associahedra.
\newblock 2019.
\newblock arXiv:1905.03797.

\bibitem{barnessplitting}
D.~Barnes.
\newblock Splitting monoidal stable model categories.
\newblock {\em J. Pure Appl. Algebra}, 213(5):846--856, 2009.

\bibitem{BarnesGreenleesKedziorek}
D.~Barnes, J.~P.~C. Greenlees, and M.~K\k{e}dziorek.
\newblock An algebraic model for rational na\"ive-commutative equivariant ring
  spectra.
\newblock {\em Homology Homotopy Appl.}, 21(1):73--93, 2018.

\bibitem{BarnesKedziorek}
David Barnes and Magdalena K\k{e}dziorek.
\newblock An introduction to algebraic models for rational {$G$}-spectra.
\newblock 2020.
\newblock arXiv.org:2004.01566 [math.AT].

\bibitem{BarnesRoitzheim}
David Barnes and Constanze Roitzheim.
\newblock Rational equivariant rigidity.
\newblock In {\em An alpine expedition through algebraic topology}, volume 617
  of {\em Contemp. Math.}, pages 13--30. Amer. Math. Soc., Providence, RI,
  2014.

\bibitem{BlumbergHill}
Andrew~J. Blumberg and Michael~A. Hill.
\newblock Operadic multiplications in equivariant spectra, norms, and
  transfers.
\newblock {\em Adv. Math.}, 285:658--708, 2015.

\bibitem{BlumbergHill2}
Andrew~J. Blumberg and Michael~A. Hill.
\newblock Incomplete {T}ambara functors.
\newblock {\em Algebr. Geom. Topol.}, 18(2):723--766, 2018.

\bibitem{BlumbergHill3}
Andrew~J. Blumberg and Michael~A. Hill.
\newblock The right adjoint to the equivariant operadic forgetful functor on
  incomplete {T}ambara functors.
\newblock In {\em Homotopy theory: tools and applications}, volume 729 of {\em
  Contemp. Math.}, pages 75--92. Amer. Math. Soc., Providence, RI, 2019.

\bibitem{BHIKM}
Anna~Marie Bohmann, Christy Hazel, Jocelyne Ishak, Magdalena K{\k{e}}dziorek,
  and Clover May.
\newblock Naive-commutative structure on rational equivariant {$K$}-theory for
  abelian groups.
\newblock 2020.
\newblock arXiv: 2002.01556. To appear in \emph{Top. Appl}.

\bibitem{Bohme1}
Benjamin B\"{o}hme.
\newblock Multiplicativity of the idempotent splittings of the {B}urnside ring
  and the {$G$}-sphere spectrum.
\newblock {\em Adv. Math.}, 347:904--939, 2019.

\bibitem{Bohme2}
Benjamin B\"{o}hme.
\newblock Idempotent characters and equivariantly multiplicative splittings of
  {K}-theory.
\newblock {\em Bull. Lond. Math. Soc.}, 52(4):730--745, 2020.

\bibitem{BonventrePereira}
Peter Bonventre and Lu\'{\i}s~A. Pereira.
\newblock Genuine equivariant operads.
\newblock {\em Adv. Math.}, 381:107502, 2021.

\bibitem{ConstenobleWaner}
Steven~R. Costenoble and Stefan Waner.
\newblock Fixed set systems of equivariant infinite loop spaces.
\newblock {\em Trans. Amer. Math. Soc.}, 326(2):485--505, 1991.

\bibitem{GM_Tate}
J.~P.~C. Greenlees and J.~P. May.
\newblock Generalized {T}ate cohomology.
\newblock {\em Mem. Amer. Math. Soc.}, 113(543):viii+178, 1995.

\bibitem{GutierrezWhite}
Javier~J. Guti\'{e}rrez and David White.
\newblock Encoding equivariant commutativity via operads.
\newblock {\em Algebr. Geom. Topol.}, 18(5):2919--2962, 2018.

\bibitem{HHR}
M.~A. Hill, M.~J. Hopkins, and D.~C. Ravenel.
\newblock On the nonexistence of elements of {K}ervaire invariant one.
\newblock {\em Ann. of Math. (2)}, 184(1):1--262, 2016.

\bibitem{Joachim}
Michael Joachim.
\newblock Higher coherences for equivariant {$K$}-theory.
\newblock In {\em Structured ring spectra}, volume 315 of {\em London Math.
  Soc. Lecture Note Ser.}, pages 87--114. Cambridge Univ. Press, Cambridge,
  2004.

\bibitem{KedziorekExceptional}
Magdalena K\k{e}dziorek.
\newblock An algebraic model for rational {$G$}-spectra over an exceptional
  subgroup.
\newblock {\em Homology Homotopy Appl.}, 19(2):289--312, 2017.

\bibitem{McClureTate}
J.~E. McClure.
\newblock {$E_\infty$}-ring structures for {T}ate spectra.
\newblock {\em Proc. Amer. Math. Soc.}, 124(6):1917--1922, 1996.

\bibitem{RichterShipley}
Birgit Richter and Brooke Shipley.
\newblock An algebraic model for commutative {$H\mathbb{Z}$}-algebras.
\newblock {\em Algebr. Geom. Topol.}, 17(4):2013--2038, 2017.

\bibitem{Rubin}
Jonathan Rubin.
\newblock Combinatorial {$N_\infty$}-operads.
\newblock arXiv:1705.03585, 2017.

\bibitem{ss03stabmodcat}
S.~Schwede and B.~Shipley.
\newblock Stable model categories are categories of modules.
\newblock {\em Topology}, 42(1):103--153, 2003.

\bibitem{Segal}
Graeme Segal.
\newblock Equivariant {$K$}-theory.
\newblock {\em Inst. Hautes \'{E}tudes Sci. Publ. Math.}, (34):129--151, 1968.

\bibitem{Serre1951}
Jean-Pierre Serre.
\newblock Homologie singuli\`ere des espaces fibr\'{e}s. {A}pplications.
\newblock {\em Ann. of Math. (2)}, 54:425--505, 1951.

\bibitem{ShipleyHZ}
B.~Shipley.
\newblock {$H \mathbb{Z}$}-algebra spectra are differential graded algebras.
\newblock {\em Amer. J. Math.}, 129(2):351--379, 2007.

\bibitem{Ullman}
John Ullman.
\newblock Tambara functors and commutative ring spectra.
\newblock 2013.
\newblock arXiv:1304.4912.

\bibitem{Webb_guide}
Peter Webb.
\newblock A guide to {M}ackey functors.
\newblock In {\em Handbook of algebra, {V}ol. 2}, volume~2 of {\em Handb.
  Algebr.}, pages 805--836. Elsevier/North-Holland, Amsterdam, 2000.

\bibitem{Wimmer}
Christian Wimmer.
\newblock A model for genuine equivariant commutative ring spectra away from
  the group order.
\newblock 2019.
\newblock arXiv.org:1905.12420 [math.AT].

\end{thebibliography}
\bibliographystyle{plain}

\end{document}